\newtheorem{theorem}{Theorem}[section]
\newtheorem{lemma}[theorem]{Lemma}
\newtheorem{corollary}[theorem]{Corollary}
\theoremstyle{definition}
\theoremstyle{remark}
\numberwithin{equation}{section}
\begin{document}

\title[On a result of Koecher]
{On a result of Koecher concerning Markov-Ap\'ery type formulas for the Riemann
zeta function}

\author{Karl Dilcher}
\address{Department of Mathematics and Statistics\\
 Dalhousie University\\
         Halifax, Nova Scotia, B3H 4R2, Canada}
\email{dilcher@mathstat.dal.ca}

\author{Christophe Vignat}
\address{LSS-Supelec, Universit\'e Paris-Sud, Orsay, France and Department of
Mathematics, Tulane University, New Orleans, LA 70118, USA}
\email{cvignat@tulane.edu}
\keywords{Riemann zeta function, Euler sum, Markov-Ap\'ery identity}
\subjclass[2010]{Primary: 11M06; Secondary: 11M32, 33C05}
\thanks{The first author was supported in part by the Natural Sciences and 
Engineering Research Council of Canada}

%\date{November 19, 2019}

\setcounter{equation}{0}

\begin{abstract}
Koecher in 1980 derived a method for obtaining identities for the Riemann zeta
function at odd positive integers, including a classical result for $\zeta(3)$
due to Markov and rediscovered by Ap\'ery. In this paper we extend Koecher's 
method to a very general setting and prove two more specific but still rather
general results. As applications we obtain infinite classes of identities for
alternating Euler sums, further Markov-Ap\'ery type identities, and identities
for even powers of $\pi$.
\end{abstract}

\maketitle

\section{Introduction}

At least since Ap\'ery's proof of the irrationality of $\zeta(3)$ there has 
been a great deal of interest in identities such as 
\begin{equation}\label{1.1}
\zeta(3) = \frac{5}{2}\sum_{k=1}^\infty\frac{(-1)^{k-1}}{\binom{2k}{k}k^3}.
\end{equation}
This identity was actually first obtained by Markov in 1890; see, e.g., 
\cite{KS} which provides a historical perspective. Following Ap\'ery's work,
numerous other related identities were obtained or rediscovered, some of them
of a more general nature. For instance, Koecher \cite{Ko} used the generating
function 
\begin{equation}\label{1.2}
\sum_{k=0}^\infty\zeta(2k+3)x^{2k} = \sum_{n=1}^\infty\frac{1}{n(n^2-x^2)}
\qquad (|x|<1)
\end{equation}
as motivation to prove the identity 
\begin{equation}\label{1.3}
\sum_{n=1}^\infty\frac{1}{n(n^2-x^2)} 
= \frac{1}{2}\sum_{k=1}^\infty\frac{(-1)^{k-1}}{\binom{2k}{k}k^3}\cdot
\frac{5k^2-x^2}{k^2-x^2}\prod_{m=1}^{k-1}\big(1-\frac{x^2}{m^2}\big).
\end{equation}
The generating function \eqref{1.2} is easy to obtain by expanding each summand
on the right as a geometric series and changing the order of summation. 
Koecher obtained \eqref{1.1} by expanding the right-hand side of \eqref{1.3} 
in powers of $x$ and equating the constant coefficients in \eqref{1.2} and
\eqref{1.3}. Similarly, by equating coefficients of $x^2$, he obtained
\begin{equation}\label{1.4}
\zeta(5) = 2\sum_{k=1}^\infty\frac{(-1)^{k-1}}{\binom{2k}{k}k^5}
-\frac{5}{2}\sum_{k=1}^\infty\frac{(-1)^{k-1}H_{k-1}^{(2)}}{\binom{2k}{k}k^3},
\end{equation}
where $H_n^{(2)}=\sum_{j=1}^nj^{-2}$ is a generalized harmonic number.

Koecher also gave a corresponding formula for $\zeta(7)$, and it is clear that
such identities can be obtained from \eqref{1.2} and \eqref{1.3} for any
further odd argument of the zeta function. It should also be noted that
independently, and using a different method, Leshchiner \cite{Le} obtained the
same expanded identities as did Koecher.

The current paper is based on the observation that Koecher's proof is 
``structural", that is, its main ideas can be generalized. It is the purpose
of this paper to prove such a generalization, and to consider some special
cases. These, in turn, will lead to various identities or classes of identities,
some of which are new.

This paper is structured as follows. After we prove our main result in 
Section~2, we derive two different instances in Section~3, one of which includes
Koecher's original result \eqref{1.3} as a special case. The remainder of the
paper is devoted to applications of these results: In Section~4 we 
obtain a general result on alternating Euler sums, and in Section~5 we derive
an infinite class of Markov-Ap\'ery type formulas which generalize the 
identity \eqref{1.1}. Finally, in Section~6, we derive a class of identities
for positive even powers of $\pi$.

\section{The main result}

The basic idea in our approach is to replace the sequence of positive integer
squares, which plays an important role in Koecher's proof, by an arbitrary
increasing sequence
\begin{equation}\label{2.1}
{\bf z} := (z_1, z_2, z_3,\ldots),\quad 0 < z_1 < z_2 <\cdots
\end{equation}
of positive real numbers, with the additional condition
\begin{equation}\label{2.2}
z_n \geq \varepsilon\,n, \quad n\geq 1,\quad\hbox{for some}\quad \varepsilon >0.
\end{equation}
Extending Koecher's notation in \cite{Ko}, we denote
\begin{equation}\label{2.3}
(n;k)_{{\bf z},\alpha} := z_n^\alpha\prod_{i=1}^k(z_n-z_i),\quad n\geq 1,\quad
k\geq 0,
\end{equation}
where $n$ and $k$ are integers and $\alpha\geq 0$ is a real constant. We then
define the zeta function belonging to the sequence $\bf z$ by
\begin{equation}\label{2.4}
\zeta_{\bf z}(s) := \sum_{n=1}^\infty\frac{1}{z_n^s},\quad {\rm Re}(s)>1.
\end{equation}
Absolute convergence of this series for ${\rm Re}(s)>1$ is guaranteed by the
condition \eqref{2.2}. It is now easy to derive an analogue of the generating
function \eqref{1.2}.

\begin{lemma}\label{lem:2.1}
Let $\bf z$ be a sequence satisfying \eqref{2.1} and \eqref{2.2}, and 
$\alpha\in{\mathbb R}$ be such that either $\alpha>0$, or $\alpha=0$ and 
$\sum z_n^{-1}$ converges. Then
\begin{equation}\label{2.5}
\sum_{n=1}^\infty\frac{1}{(z_n-x)z_n^\alpha} 
= \sum_{k=0}^\infty\zeta_{\bf z}(k+\alpha+1)x^k,\quad |x| <\min\{1, z_1\}.
\end{equation}
\end{lemma}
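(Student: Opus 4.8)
The plan is to expand each summand on the left-hand side as a geometric series and then interchange the order of summation, which is the natural analogue of how the classical identity \eqref{1.2} is derived. Concretely, for each fixed $n$ and for $|x|<z_1\le z_n$ I would write
\begin{equation*}
\frac{1}{(z_n-x)z_n^\alpha} = \frac{1}{z_n^{\alpha+1}}\cdot\frac{1}{1-x/z_n}
= \frac{1}{z_n^{\alpha+1}}\sum_{k=0}^\infty\Bigl(\frac{x}{z_n}\Bigr)^k
= \sum_{k=0}^\infty\frac{x^k}{z_n^{k+\alpha+1}}.
\end{equation*}
Summing over $n\ge 1$ and formally swapping the two sums gives
\begin{equation*}
\sum_{n=1}^\infty\frac{1}{(z_n-x)z_n^\alpha}
= \sum_{k=0}^\infty x^k\sum_{n=1}^\infty\frac{1}{z_n^{k+\alpha+1}}
= \sum_{k=0}^\infty\zeta_{\bf z}(k+\alpha+1)x^k,
\end{equation*}
where the inner sum is exactly $\zeta_{\bf z}(k+\alpha+1)$ by the definition \eqref{2.4}. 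Each such inner sum converges because $k+\alpha+1>1$ under either hypothesis on $\alpha$ (when $\alpha>0$ this holds for all $k\ge 0$; when $\alpha=0$ the $k=0$ term is $\zeta_{\bf z}(1)=\sum z_n^{-1}$, which converges by assumption, and all higher terms converge by \eqref{2.2}).

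The main obstacle, and the only step requiring genuine justification, is the interchange of the two infinite sums. I would justify this by establishing absolute convergence of the double series and appealing to Tonelli's theorem (or Fubini for nonnegative-modulus double series). For this I need a uniform bound on the tail. Using the hypothesis \eqref{2.2} in the form $z_n\ge\varepsilon n$, the absolute value of the general term is bounded by
\begin{equation*}
\Bigl|\frac{x^k}{z_n^{k+\alpha+1}}\Bigr|
\le \frac{|x|^k}{(\varepsilon n)^{k+\alpha+1}}
= \frac{1}{(\varepsilon n)^{\alpha+1}}\Bigl(\frac{|x|}{\varepsilon n}\Bigr)^k,
\end{equation*}
so that summing first over $k$ (a convergent geometric series, since $|x|<z_1\le z_n$ forces $|x|/z_n<1$) yields a bound comparable to $z_n^{-\alpha-1}/(1-|x|/z_n)$; summing this over $n$ converges by the same convergence argument used to define $\zeta_{\bf z}$. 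Hence the double series is absolutely convergent and the rearrangement is valid.

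Two small points deserve attention. First, the restriction $|x|<\min\{1,z_1\}$ in \eqref{2.5} combines two needs: $|x|<z_1$ guarantees $|x|<z_n$ for every $n$, so each geometric series converges, while $|x|<1$ ensures the resulting power series in $x$ has the stated radius and makes the coefficient extraction in later sections meaningful. Second, in the borderline case $\alpha=0$ the convergence of the $k=0$ coefficient is \emph{exactly} the hypothesis $\sum z_n^{-1}<\infty$, so no case analysis beyond checking $k=0$ separately is needed; for all $k\ge 1$ and for all $\alpha>0$, convergence of $\zeta_{\bf z}(k+\alpha+1)$ is immediate from \eqref{2.2}. With the absolute convergence established, the interchange is routine and the identity \eqref{2.5} follows directly.
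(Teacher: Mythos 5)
Your proof is correct and follows essentially the same route as the paper: expand each term $\frac{1}{(z_n-x)z_n^\alpha}$ as a geometric series in $x/z_n$ and interchange the two summations, with the interchange justified by absolute convergence of the double series (the paper simply asserts this; you spell it out via the bound $\sum_k |x|^k z_n^{-k-\alpha-1} = z_n^{-\alpha}(z_n-|x|)^{-1} \le C z_n^{-\alpha-1}$ and condition \eqref{2.2}, respectively the hypothesis $\sum z_n^{-1}<\infty$ when $\alpha=0$). One tiny blemish: your displayed bound with ratio $|x|/(\varepsilon n)$ need not give a convergent geometric series for small $n$, but this is harmless since, as you note in the same sentence, the $k$-summation should be controlled by $|x|/z_n<1$ rather than by $|x|/(\varepsilon n)$.
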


\begin{proof}
By the condition on $\alpha$, and since $|x|<\min\{1, z_1\}$, both sides of 
\eqref{2.5} are 
absolutely convergent, and the following series operations are legitimate.
Beginning with the left-hand side and using geometric series expansions, we get
\begin{align*}
\sum_{n=1}^\infty\frac{1}{(z_n-x)z_n^\alpha}
&=\sum_{n=1}^\infty\frac{1}{z_n^{\alpha+1}(1-\frac{x}{z_n})}
=\sum_{n=1}^\infty\frac{1}{z_n^{\alpha+1}}\sum_{k=0}^\infty\frac{x^k}{z_n^k}\\
&=\sum_{k=0}^\infty\bigg(\sum_{n=1}^\infty\frac{1}{z_n^{k+\alpha+1}}\bigg)x^k.
\end{align*}
This, with \eqref{2.4}, gives \eqref{2.5} as required.
\end{proof}

We are now ready to state and prove our main result. The proof follows 
Koecher's ideas in \cite{Ko}.

\begin{theorem}\label{thm:2.2}
Let $\bf z$ and $\alpha$ be as in Lemma~\ref{lem:2.1}, and suppose that
\begin{equation}\label{2.5a} 
P_N:=\frac{z_1+z_N}{z_{N+1}-z_N}\cdot\frac{z_1+z_{N-1}}{z_{N+1}-z_{N-1}}\cdots
\frac{z_1+z_1}{z_{N+1}-z_1},\qquad N=1, 2,\ldots,
\end{equation}
is a bounded sequence. If we set 
\begin{equation}\label{2.6}
\gamma_k(x) := \frac{1}{z_k-x}\cdot\frac{1}{(k;k-1)_{{\bf z},\alpha}}
+ \sum_{n=k+1}^\infty\frac{1}{(n;k)_{{\bf z},\alpha}},
\end{equation}
then for all complex $x$ with $|x|<z_1$ we have
\begin{equation}\label{2.7}
\sum_{n=1}^\infty\frac{1}{(z_n-x)z_n^\alpha}
= \sum_{k=1}^\infty\gamma_k(x)\prod_{\ell=1}^{k-1}(x-z_\ell).
\end{equation}
\end{theorem}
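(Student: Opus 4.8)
The plan is to set up a telescoping recurrence in the spirit of Koecher's original argument. For $k\geq 1$ define
\[
f_k(x) := \sum_{n=k}^\infty\frac{1}{(z_n-x)(n;k-1)_{{\bf z},\alpha}},
\]
so that, since $(n;0)_{{\bf z},\alpha}=z_n^\alpha$, the function $f_1(x)$ is precisely the left-hand side of \eqref{2.7}. The growth condition \eqref{2.2} guarantees that each $f_k$ converges absolutely for $|x|<z_1$: the general term is asymptotically $z_n^{-(\alpha+k)}$, and $\sum z_n^{-(\alpha+k)}$ converges for all $k\geq 2$, while the case $k=1$, $\alpha=0$ is covered by the assumed convergence of $\sum z_n^{-1}$.

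First I would isolate the term $n=k$ in $f_k(x)$ and rewrite each remaining summand using the factorization $(n;k)_{{\bf z},\alpha}=(n;k-1)_{{\bf z},\alpha}(z_n-z_k)$ together with the elementary identity $\tfrac{z_n-z_k}{z_n-x}=1+\tfrac{x-z_k}{z_n-x}$. Splitting the resulting absolutely convergent sum into two pieces reproduces exactly the two terms defining $\gamma_k(x)$ in \eqref{2.6} plus a multiple of $f_{k+1}$, yielding the recurrence
\[
f_k(x) = \gamma_k(x) + (x-z_k)f_{k+1}(x).
\]
Iterating this from $k=1$ up to $k=N$ produces
\[
f_1(x) = \sum_{k=1}^N\gamma_k(x)\prod_{\ell=1}^{k-1}(x-z_\ell) + R_N(x),\qquad
R_N(x) := \prod_{\ell=1}^N(x-z_\ell)\,f_{N+1}(x),
\]
so that \eqref{2.7} will follow once I show $R_N(x)\to 0$ as $N\to\infty$.

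The main obstacle is the remainder estimate, and this is exactly where the hypothesis that $P_N$ is bounded enters. Writing $R_N(x)$ as a single series over $n\geq N+1$ and extracting the factor $(z_n-x)^{-1}z_n^{-\alpha}$, each summand carries the weight $\prod_{\ell=1}^N(x-z_\ell)\big/\prod_{i=1}^N(z_n-z_i)$. Using $|x-z_\ell|<z_1+z_\ell$ (valid since $|x|<z_1$) in the numerator, and $z_n-z_i\geq z_{N+1}-z_i$ in the denominator for every $n\geq N+1$, I can bound this weight uniformly by $P_N$. Hence
\[
|R_N(x)| \leq P_N\sum_{n=N+1}^\infty\frac{1}{|z_n-x|\,z_n^\alpha}.
\]
The sum on the right is the tail of the convergent series $f_1(x)$, so it tends to $0$, while $P_N$ stays bounded by hypothesis; therefore $R_N(x)\to 0$. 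This simultaneously shows that the series on the right of \eqref{2.7} converges and equals $f_1(x)$, completing the proof.
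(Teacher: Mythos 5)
Your proposal is correct and follows essentially the same route as the paper: your $f_k$ is just the paper's $\varphi_{k-1}$, your recurrence $f_k=\gamma_k+(x-z_k)f_{k+1}$ is the paper's recurrence \eqref{2.10} in re-indexed form, and the remainder bound via $|x-z_\ell|<z_1+z_\ell$, $z_n-z_i\geq z_{N+1}-z_i$, and the boundedness of $P_N$ is exactly the paper's estimate following \eqref{2.13}. The only (harmless) difference is presentational: you derive the recurrence by splitting $f_k$ directly rather than computing $\varphi_{k-1}-(x-z_k)\varphi_k$, and you make the absolute-convergence justifications slightly more explicit.
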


\begin{proof}
For greater ease of notation, we suppress the subscripts ${\bf z}, \alpha$.
We consider the sequence of functions $\varphi_k(x)$, defined by the series
\begin{equation}\label{2.8}
\varphi_k(x) := \sum_{n=k+1}^\infty\frac{1}{(z_n-x)(n;k)},\quad k=0, 1, 2,\ldots
\end{equation}
Since $(n;0)=z_n^\alpha$ by \eqref{2.3}, we have
\begin{equation}\label{2.9}
\varphi_0(x) := \sum_{n=1}^\infty\frac{1}{(z_n-x)z_n^\alpha},
\end{equation}
which is the left-hand side of \eqref{2.7}. Now we claim that the sequence
$\varphi_k(x)$ satisfies the recurrence relation
\begin{equation}\label{2.10}
\varphi_{k-1}(x)-(x-z_k)\varphi_k(x) = \frac{1}{(z_k-x)(k;k-1)}
+\sum_{n=k+1}^\infty\frac{1}{(n;k)},\quad k\geq 1.
\end{equation}
To prove this identity, we first note that from \eqref{2.3} we get
\begin{equation}\label{2.11}
\frac{1}{(n;k-1)} = \frac{z_n-z_k}{(n;k)}.
\end{equation}
Now we have by \eqref{2.8},
\begin{align*}
\varphi_{k-1}(x)&-(x-z_k)\varphi_k(x) 
=\sum_{n=k}^\infty\frac{1}{(z_n-x)(n;k-1)}
+(z_k-x)\sum_{n=k+1}^\infty\frac{1}{(z_n-x)(n;k)}\\
&= \frac{1}{(z_k-x)(k;k-1)}+\sum_{n=k+1}^\infty\frac{1}{(z_n-x)(n;k-1)}\\
&\quad+(z_k-x)\sum_{n=k+1}^\infty\frac{1}{(z_n-x)(n;k)}\\
&= \frac{1}{(z_k-x)(k;k-1)}
+\sum_{n=k+1}^\infty\frac{1}{(z_n-x)}\bigg(\frac{1}{(n;k-1)}
+\frac{z_k-x}{(n;k)}\bigg).
\end{align*}
With \eqref{2.11} we get
\[
\frac{1}{(n;k-1)}+\frac{z_k-x}{(n;k)} = \frac{z_n-z_k+z_k-x}{(n;k)}
= \frac{z_n-x}{(n;k)},
\]
which then leads to \eqref{2.10}, as claimed.

To solve the recurrence \eqref{2.10}, we denote its right-hand side by 
$\gamma_k(x)$, as in \eqref{2.6}. For each $k\geq 2$, we multiply both sides of
\eqref{2.10} by $(x-z_1)\cdots(x-z_{k-1})$ and add the resulting identities for
$k=1,2,\ldots,N$, for some positive integer $N$. This is a telescoping sum,
giving
\begin{equation}\label{2.12}
\varphi_0(x)-(x-z_1)\cdots(x-z_N)\varphi_N(x)
= \sum_{k=1}^N\gamma_k(x)\prod_{\ell=1}^{k-1}(x-z_\ell).
\end{equation}
With \eqref{2.8} and \eqref{2.3} we get
\begin{equation}\label{2.13}
(x-z_1)\cdots(x-z_N)\varphi_N(x) 
= \sum_{n=N+1}^{\infty}\frac{1}{(z_n-x)z_n^\alpha}\cdot
\frac{(x-z_1)\cdots(x-z_N)}{(z_n-z_1)\cdots(z_n-z_N)}.
\end{equation}
Using the fact that $|x|<z_1$, we get the estimate
\[
\left|\frac{(x-z_1)\cdots(x-z_N)}{(z_n-z_1)\cdots(z_n-z_N)}\right|
\leq \frac{(z_1+z_1)\cdots(z_1+z_N)}{(z_{N+1}-z_1)\cdots(z_{N+1}-z_N)},
\]
where in the denominator we have used the fact that $z_n\geq z_{N+1}$.
By the condition \eqref{2.5a}, the large fraction on the right 
of \eqref{2.13} is bounded, and due to the
conditions on $\alpha$, the series on the right converges.
Therefore
\[
\lim_{N\rightarrow\infty}(x-z_1)\cdots(x-z_N)\varphi_N(x) = 0,
\]
and this means that \eqref{2.12}, with \eqref{2.9}, implies \eqref{2.7} as
$N\rightarrow\infty$. The proof is now complete.
\end{proof}

\section{Two particular cases}

In this paper we will mainly consider two instances of Theorem~\ref{thm:2.2},
which we state as further theorems. The first one will have Koecher's original
result as a special case.

\begin{theorem}\label{thm:2.3}
Let the sequence ${\bf z}$ be given by
\begin{equation}\label{2.14}
z_n = (n+c)^\beta+d,\qquad c>-1,\; d\geq 0,\; \beta>1,
\end{equation}
for all integers $n\geq 1$. If $\alpha\geq 0$ and $\gamma_k(x)$ is as defined 
in \eqref{2.6}, then for all complex $x$ with $|x|<\min\{1,z_1\}$ we have
\begin{equation}\label{2.15}
\sum_{m=0}^\infty\zeta_{\bf z}(m+\alpha+1)x^m
= \sum_{k=1}^\infty\gamma_k(x)\prod_{\ell=1}^{k-1}(x-z_\ell).
\end{equation}
\end{theorem}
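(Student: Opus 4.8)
The plan is to deduce Theorem~\ref{thm:2.3} as a direct specialization of Theorem~\ref{thm:2.2}, so the real work is verifying that the sequence $z_n=(n+c)^\beta+d$ satisfies all the hypotheses of the main theorem, after which the left-hand side is rewritten using Lemma~\ref{lem:2.1}. I would begin by checking the basic structural conditions \eqref{2.1} and \eqref{2.2}. Monotonicity is clear: for $c>-1$ and $n\geq 1$ the base $n+c$ is positive and increasing, and since $\beta>1$ the map $t\mapsto t^\beta$ is increasing on positive reals, so $0<z_1<z_2<\cdots$. For the linear lower bound \eqref{2.2}, I would note that $z_n\geq(n+c)^\beta$, and since $\beta>1$ this grows faster than linearly; concretely, for $n$ large enough $(n+c)^\beta\geq n$, and adjusting the constant $\varepsilon$ to handle the finitely many small indices gives $z_n\geq\varepsilon n$ for all $n\geq 1$ with some $\varepsilon>0$.

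Next I would address the boundedness of the sequence $P_N$ defined in \eqref{2.5a}, which I expect to be the main obstacle. Each factor is $\frac{z_1+z_{N+1-j}}{z_{N+1}-z_{N+1-j}}$, and the denominators $z_{N+1}-z_{N+1-j}$ can be small for $j$ small (adjacent terms), so one must show the product does not blow up. The strategy is to take logarithms and estimate $\log P_N=\sum_{j}\log\!\bigl(z_1+z_{N+1-j}\bigr)-\sum_j\log\!\bigl(z_{N+1}-z_{N+1-j}\bigr)$, or more transparently to compare $P_N$ with the analogous product for the pure power sequence $(n+c)^\beta$ and absorb the additive constant $d$. For the numerator, $z_1+z_m=O(m^\beta)$, so $\prod_{m=1}^{N}(z_1+z_m)$ is controlled by $\prod_{m=1}^N(n+c)^\beta$ up to bounded factors. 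For the denominator one uses the mean value theorem: $z_{N+1}-z_m=(N+1+c)^\beta-(m+c)^\beta$, and since $\beta>1$ the increments grow, so $z_{N+1}-z_m\geq \beta(m+c)^{\beta-1}\bigl((N+1)-m\bigr)$ by convexity of $t^\beta$. Combining these, the ratio defining $P_N$ should be bounded uniformly in $N$; the convexity estimate is precisely what keeps the reciprocals of the small denominators from accumulating, and carefully tracking these bounds is the technical heart of the argument.

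Once the hypotheses of Theorem~\ref{thm:2.2} are verified, the conclusion \eqref{2.7} applies verbatim, giving
\[
\sum_{n=1}^\infty\frac{1}{(z_n-x)z_n^\alpha}
=\sum_{k=1}^\infty\gamma_k(x)\prod_{\ell=1}^{k-1}(x-z_\ell).
\]
Finally, I would invoke Lemma~\ref{lem:2.1}, whose hypotheses on $\alpha$ and $\bf z$ are the same ones just checked, to rewrite the left-hand side as the generating function $\sum_{m=0}^\infty\zeta_{\bf z}(m+\alpha+1)x^m$ on the common domain $|x|<\min\{1,z_1\}$. Equating the two expressions yields \eqref{2.15} exactly. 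The only subtlety at this last step is the domain of validity: Theorem~\ref{thm:2.2} holds for $|x|<z_1$ while Lemma~\ref{lem:2.1} requires $|x|<\min\{1,z_1\}$, so the identity \eqref{2.15} is asserted on the intersection $|x|<\min\{1,z_1\}$, which is consistent with the statement. I expect this final assembly to be routine; the genuine content lies entirely in the verification that $P_N$ is bounded.
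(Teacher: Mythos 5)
Your proposal is correct and follows essentially the same route as the paper: the paper also reduces everything to the boundedness of $P_N$, uses the Mean Value Theorem (your convexity estimate) to get the lower bound $z_{N+1}-z_{N+1-j}>j\beta(N+1+c-j)^{\beta-1}$ whose factor $\beta^N$ is what ultimately dominates the leftover growth $\sim N^c$, and absorbs $\tilde d:=d+z_1$ into a convergent infinite product exactly as you suggest, before combining Theorem~\ref{thm:2.2} with Lemma~\ref{lem:2.1} on the domain $|x|<\min\{1,z_1\}$. The only difference is one of completeness rather than method: the paper carries out the final combination explicitly, showing the resulting bound is $\frac{\Gamma(N+1+c)}{\Gamma(N+1)\beta^N}$ times a convergent product, which in fact tends to $0$.
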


\begin{proof}
The sequence ${\bf z}$ in \eqref{2.14} clearly satisfies \eqref{2.1} and
\eqref{2.2}. It remains to verify that the sequence $P_N$ in \eqref{2.5a} is
bounded. We first consider the denominator and note that
\[
z_{N+1}-z_{N+1-j}=(N+1+c)^\beta-(N+1+c-j)^\beta,\qquad j=1,2,\ldots,N.
\]
The Mean Value Theorem gives us
\[
\frac{(N+1+c)^\beta-(N+1+c-j)^\beta}{j} = \beta x_j^{\beta-1},\qquad
N+1+c-j < x_j < N+1+c,
\]
so that
\[
z_{N+1}-z_{N+1-j} > j\,\beta(N+1+c-j)^{\beta-1},\qquad j=1,2,\ldots,N.
\]
Hence the denominator of \eqref{2.5a} is larger than
\begin{align*}
N!\beta^N&\big((N+c)(N-1+c)\cdots(1+c)\big)^{\beta-1} \\
&=\frac{\Gamma(N+1)\Gamma(c+1)\beta^N}{\Gamma(N+1+c)}
\big((N+c)(N-1+c)\cdots(1+c)\big)^{\beta},
\end{align*}
and with \eqref{2.14} and \eqref{2.5a} we get, with $\tilde{d}:=d+z_1$,
\begin{equation}\label{2.16}
P_N < \frac{1}{\Gamma(c+1)}\cdot\frac{\Gamma(N+1+c)}{\Gamma(N+1)\beta^N}
\cdot\frac{(N+c)^\beta+\tilde{d}}{(N+c)^\beta}
\cdot\frac{(N-1+c)^\beta+\tilde{d}}{(N-1+c)^\beta}
\cdots\frac{(1+c)^\beta+\tilde{d}}{(1+c)^\beta}.
\end{equation}
We now consider two different limits for the right-hand side of \eqref{2.16}.
First,
\begin{align*}
\lim_{N\to\infty}\frac{\Gamma(N+1+c)}{\Gamma(N+1)\beta^N}
&= \lim_{N\to\infty}\left(\frac{\Gamma(N+c)}{\Gamma(N)\cdot N^c}
\cdot\frac{N^c}{\beta^{N-1}}\right) \\
&= \lim_{N\to\infty}\frac{\Gamma(N+c)}{\Gamma(N)\cdot N^c}
\cdot\lim_{N\to\infty}\frac{N^c}{\beta^{N-1}} = 0
\end{align*}
since, by a well-known property of the gamma function, the first limit on the
right-hand side is 1, while the second limit is 0 for any real $c>-1$ since
$\beta>1$. 

Next, the remaining fractions on the right of \eqref{2.16} can be rewritten,
in reverse order, as
\[
\bigg(1+\frac{\tilde{d}}{(1+c)^\beta}\bigg)
\bigg(1+\frac{\tilde{d}}{(2+c)^\beta}\bigg)\cdots
\bigg(1+\frac{\tilde{d}}{(N+c)^\beta}\bigg).
\]
The limit of this expression, as $N\to\infty$, is an infinite product which 
converges since 
\[
\sum_{j=1}^\infty\frac{\tilde{d}}{(j+c)^\beta},\qquad \beta>1, c>-1,
\]
is a convergent series. Hence the limit of the right of \eqref{2.16} is 0,
as $N\to\infty$. This, in turn means that $P_N$ is a bounded sequence.
All conditions of Theorem~\ref{thm:2.2} are therefore satisfied, and 
\eqref{2.15} follows from \eqref{2.5} and \eqref{2.7}.
\end{proof}

For applying Theorem~\ref{thm:2.2} or Theorem~\ref{thm:2.3}, the infinite 
series on the right of \eqref{2.6} is usually the most difficult part to 
evaluate. However, the evaluation required for deriving Koecher's result 
\eqref{1.3} is quite straightforward, and is given in the following lemma. We 
state a more general version, which will be useful later in this paper.

\begin{lemma}\label{lem:2.4}
Let $(n;k)=(n+r+k)(n+r+k-1)\cdots(n+r-k)$, where $r\geq 0$ is an integer. Then for each $k\geq 1$ we have
\begin{equation}\label{2.17}
\sum_{n=k+1}^\infty\frac{1}{(n;k)} = \frac{r!}{2k(2k+r)!}.
\end{equation}
\end{lemma}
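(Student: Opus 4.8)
The plan is to evaluate the sum $\sum_{n=k+1}^\infty \frac{1}{(n;k)}$ by recognizing that the summand, being a reciprocal of a product of $2k+1$ consecutive integers, is a prime candidate for \emph{telescoping via partial fractions}. Writing out $(n;k) = (n+r+k)(n+r+k-1)\cdots(n+r-k)$, I observe that the key algebraic identity is
\begin{equation*}
\frac{1}{(n;k)} = \frac{1}{2k}\left(\frac{1}{(n-1;k)^{\sharp}} - \frac{1}{(n;k)^{\sharp}}\right),
\end{equation*}
where the telescoping blocks $(n;k)^{\sharp}$ are products of only $2k$ consecutive integers, obtained by dropping one endpoint factor from $(n;k)$. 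Concretely, I would set $a_n := \frac{1}{(n+r+k-1)(n+r+k-2)\cdots(n+r-k)}$ (the product of the $2k$ integers from $n+r-k$ up to $n+r+k-1$) and verify that $a_{n-1} - a_n = \frac{2k}{(n;k)}$, since the difference of two such adjacent $2k$-fold products, sharing $2k-1$ common factors, produces the full $(2k+1)$-fold product $(n;k)$ in the denominator with numerator equal to the difference of the two extreme factors, namely $(n+r+k) - (n+r-k-1) = 2k+1$...

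First I would nail down the correct normalization so the numerator difference is exactly $2k$; a clean choice is $b_n := \frac{(r+k-1)!\,(\text{falling product})}{\cdots}$, but more transparently I would just check the elementary identity directly by clearing denominators: the claim $a_{n-1}-a_n = \frac{2k}{(n;k)}$ reduces, after multiplying through by $(n;k)$, to $(n+r+k) - (n+r-k-1)$ or a similar linear expression equaling $2k$ up to a constant, which is routine. Summing $\frac{1}{(n;k)} = \frac{1}{2k}(a_{n-1}-a_n)$ from $n=k+1$ to $\infty$ then telescopes to $\frac{1}{2k}\,a_{k}$, where the tail $\lim_{n\to\infty} a_n = 0$ because $a_n = O(n^{-2k})$ with $2k\geq 2$.

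It remains to evaluate the surviving boundary term $a_{k}$ explicitly. By construction $a_k$ is the reciprocal of a product of $2k$ consecutive integers whose smallest factor is $(k)+r-k = r+1$ and whose largest is $(k)+r+k-1 = r+2k-1$; hence $a_k = \dfrac{1}{(r+2k-1)(r+2k-2)\cdots(r+1)} = \dfrac{r!}{(r+2k-1)!}$. Wait, I must track the endpoint carefully against the target $\frac{r!}{2k(2k+r)!}$: the stated answer has $(2k+r)!$ in the denominator, so the telescoping block should be the product of the $2k$ factors running from $r+1$ through $r+2k$ when $n=k+1$, giving $a_{k+1\text{-block}}=\frac{r!}{(r+2k)!}$, and then $\sum = \frac{1}{2k}\cdot\frac{r!}{(2k+r)!}$, matching \eqref{2.17} exactly. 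I would therefore choose the $2k$-fold telescoping product to be anchored so that the first surviving term is precisely $\frac{r!}{(r+2k)!}$.

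The main obstacle is purely bookkeeping: getting the endpoints of the $2k$-fold product exactly right so that (i) the difference identity has numerator exactly $2k$ and (ii) the surviving boundary term is $\frac{r!}{(r+2k)!}$ rather than an off-by-one variant. There is no analytic difficulty---convergence is immediate from $z_n\geq\varepsilon n$ and the $O(n^{-2k})$ decay---and no deep idea beyond the standard partial-fraction telescoping for reciprocals of products of consecutive integers; the only risk is an index error, which I would guard against by checking the formula \eqref{2.17} against the small case $k=1$, where $\sum_{n=2}^\infty\frac{1}{(n+r+1)(n+r)(n+r-1)} = \frac{r!}{2(r+2)!} = \frac{1}{2(r+1)(r+2)}$, a standard and easily verified value.
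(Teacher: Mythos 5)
Your proposal is correct and takes essentially the same route as the paper's own proof: the identical partial-fraction telescoping, writing $\frac{1}{(n;k)}$ as $\frac{1}{2k}$ times a difference of reciprocals of the two $2k$-fold subproducts obtained by dropping an endpoint factor, with the sum collapsing to the single boundary term $\frac{1}{2k}\cdot\frac{r!}{(2k+r)!}$ at $n=k+1$. The one slip is the indexing of your claimed identity --- with your choice of $a_n$ the correct statement is $a_n - a_{n+1} = \frac{2k}{(n;k)}$, since $a_{n-1}-a_n$ actually equals $\frac{2k}{(n-1;k)}$ --- but you explicitly flag this as bookkeeping and then anchor the boundary term correctly, so the argument goes through as the paper's does.
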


\begin{proof}
The following partial fraction expansion is easy to find and to verify:
\begin{align*}
&\frac{1}{(n+r+k)(n+r+k-1)\cdots(n+r-k)} \\
&\qquad =\frac{1/2k}{(n+r+k-1)\cdots(n+r-k)}
-\frac{1/2k}{(n+r+k)\cdots(n+r-k+1)}.
\end{align*}
Summing this over all $n\geq k+1$ gives a telescoping series on the right, with
only the first term on the right remaining for $n=k+1$. That is, we are left 
with 
\[
\frac{1}{2k}\cdot\frac{1}{(2k+r)(2k+r-1)\cdots r+1} = \frac{r!}{2k(2k+r)!},
\]
as claimed.
\end{proof}

To obtain Koecher's identity \eqref{1.3}, we set $c=d=0$ and $\beta=2$ in
\eqref{2.14}, so that $z_n=n^2$. Furthermore, we replace $x$ by $x^2$ and set 
$\alpha=1/2$; then by \eqref{2.4} with $r=0$ the left-hand side of \eqref{2.15}
becomes the left-hand side of \eqref{1.2}. Next, by \eqref{2.3} we have
\[
(n;k)_{{\bf z},1/2} = (n+k)\cdots(n+1)n(n-1)\cdots(n-k)
\] 
and $(k;k-1)_{{\bf z},1/2}=(2k-1)!$. Hence with \eqref{2.6} and \eqref{2.17} 
we have
\[
\gamma_k(x^2) = \frac{1}{k^2-x^2}\cdot\frac{1}{(2k-1)!}+\frac{1}{2k(2k)!}
= \frac{1}{2k(2k)!}\frac{5k^2-x^2}{k^2-x^2},
\]
and the right-hand side of \eqref{2.15} becomes
\[
\sum_{k=0}^\infty\frac{1}{2k(2k)!}\frac{5k^2-x^2}{k^2-x^2}(-1)^{k-1}(k-1)!^2
\prod_{\ell=1}^{k-1}\bigg(1-\frac{x^2}{\ell^2}\bigg).
\]
This is the same as the right-hand side of \eqref{1.3}, as claimed.

\medskip
Since the proof of Theorem~\ref{thm:2.3} required $\beta > 1$, the situation in
the case $\beta=1$ in \eqref{2.14} will be quite different. The next result 
deals with this case.

\begin{theorem}\label{thm:2.5}
Let the sequence ${\bf z}$ be given by $z_n = n+c$ for all integers $n\geq 1$,
where $c>-1$ is a real constant. If $\alpha>\max\{0,c\}$ and $\gamma_k(x)$ is 
as defined in \eqref{2.6}, then for all complex $x$ with $|x|\leq\varepsilon$,
where $\varepsilon>0$ is sufficiently small, we have
\begin{equation}\label{2.18}
\sum_{m=0}^\infty\zeta_{\bf z}(m+\alpha+1)x^m
= \sum_{k=1}^\infty\gamma_k(x)\prod_{\ell=1}^{k-1}(x-z_\ell).
\end{equation}
\end{theorem}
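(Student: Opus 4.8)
The plan is to apply Theorem~\ref{thm:2.2} with the sequence $z_n=n+c$ and the stated value of $\alpha$. For this we must verify the three hypotheses of Theorem~\ref{thm:2.2}: that $\bf z$ satisfies \eqref{2.1} and \eqref{2.2}, that the condition on $\alpha$ holds, and that the sequence $P_N$ in \eqref{2.5a} is bounded. The first two are immediate: since $c>-1$ we have $0<z_1<z_2<\cdots$, and $z_n=n+c\geq \varepsilon_0\,n$ for a suitable $\varepsilon_0>0$, so \eqref{2.1} and \eqref{2.2} hold; moreover $\alpha>\max\{0,c\}\geq 0$ gives the hypothesis on $\alpha$. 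The main point, and the key difference from Theorem~\ref{thm:2.3}, is the boundedness of $P_N$.

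First I would compute $P_N$ explicitly in this case. With $z_n=n+c$ the differences in the denominator are $z_{N+1}-z_{N+1-j}=j$, so the product of denominators over $j=1,\dots,N$ is simply $N!$. The numerator is $\prod_{j=1}^{N}(z_1+z_{N+1-j})=\prod_{i=1}^{N}(z_1+z_i)=\prod_{i=1}^{N}(1+2c+i)$, a shifted factorial. Thus
\[
P_N=\frac{1}{N!}\prod_{i=1}^{N}(i+1+2c)
=\frac{\Gamma(N+2+2c)}{\Gamma(2+2c)\,\Gamma(N+1)}.
\]
By the standard asymptotic $\Gamma(N+a)/\Gamma(N+b)\sim N^{a-b}$, this grows like $N^{1+2c}/\Gamma(2+2c)$, which is \emph{unbounded} when $c>-\tfrac12$. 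Hence, unlike in Theorem~\ref{thm:2.3}, the hypotheses of Theorem~\ref{thm:2.2} are \emph{not} satisfied for general $c$, and the boundedness condition on $P_N$ cannot simply be verified as stated.

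This is exactly where the hypothesis $|x|\leq\varepsilon$ for sufficiently small $\varepsilon$ enters, and it is the main obstacle. The resolution is that I would not invoke Theorem~\ref{thm:2.2} as a black box, but rather re-examine its proof and show directly that the remainder term \eqref{2.13} tends to $0$, exploiting the smallness of $|x|$. Concretely, instead of the crude bound $|x-z_\ell|\leq z_1+z_\ell$ used in \eqref{2.13}, I would keep the finer factors $|x-z_\ell|\leq |x|+z_\ell$ and track how the product behaves. The telescoping identity \eqref{2.12} holds regardless of the boundedness of $P_N$, since it is a purely algebraic consequence of the recurrence \eqref{2.10}; only the passage to the limit $N\to\infty$ requires justification. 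So the real task is to prove that the product in \eqref{2.13},
\[
\prod_{\ell=1}^{N}\frac{|x-z_\ell|}{z_n-z_\ell}
\qquad(n\geq N+1),
\]
together with the surviving tail $\sum_{n=N+1}^\infty (z_n-x)^{-1}z_n^{-\alpha}$, vanishes as $N\to\infty$. Here the condition $\alpha>c$ is what forces the tail to decay fast enough to compensate for the polynomial growth coming from the $z_\ell$ factors, and choosing $|x|$ small keeps each factor $|x-z_\ell|/(z_n-z_\ell)$ controllably less than $1$ for the early indices.

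The concrete estimate I would set up is as follows. For $n=N+1$ the product is $\prod_{\ell=1}^N \frac{|x-z_\ell|}{z_{N+1}-z_\ell}=\prod_{\ell=1}^N \frac{|x-(\ell+c)|}{N+1-\ell}$, and bounding $|x-(\ell+c)|\leq |x|+\ell+c$ converts this into a ratio of Gamma functions times a factor involving $|x|$; the goal is to show that the resulting expression, summed against the convergent tail $\sum_{n\geq N+1}z_n^{-\alpha-1}$ and weighted by the slowly varying factor $\prod(z_n-z_\ell)^{-1}$ relative to its value at $n=N+1$, is $O(N^{1+c-\alpha})$ or similar, which $\to 0$ precisely because $\alpha>c$. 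The hard part will be carrying out this uniform estimate cleanly: unlike the self-contained computation in Theorem~\ref{thm:2.3}, here one must balance the growth of the numerator product against both the decay $z_n^{-\alpha}$ and the separation $z_n-z_\ell$ in the denominator, and make explicit how small $\varepsilon$ must be taken so that the limit is genuinely $0$. Once this limit is established, \eqref{2.18} follows from \eqref{2.12} and \eqref{2.9} on letting $N\to\infty$, exactly as at the end of the proof of Theorem~\ref{thm:2.2}, combined with Lemma~\ref{lem:2.1} to identify the left-hand side as the generating-function series.
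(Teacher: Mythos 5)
You have essentially reproduced the paper's own argument: the paper likewise does not invoke Theorem~\ref{thm:2.2} as a black box but reuses its proof up to \eqref{2.13}, bounds the product there (using $|x|\leq\varepsilon$) by $\prod_{\ell=1}^{N}(\ell+c+\varepsilon)/N!=\Gamma(N+1+c+\varepsilon)/\big(\Gamma(N+1)\Gamma(1+c+\varepsilon)\big)\sim N^{c+\varepsilon}$, bounds the tail sum by an integral estimate as $O(N^{-\alpha})$, and concludes because $c+\varepsilon-\alpha<0$ once $\varepsilon$ is chosen small, e.g.\ $\varepsilon=(\alpha-c)/2$. The only slip is your stated exponent $O(N^{1+c-\alpha})$, which would require $\alpha>1+c$ to vanish; the correct combined bound is $O(N^{c+\varepsilon-\alpha})$, and your own remark that $\varepsilon$ must be made explicit and small relative to $\alpha-c$ is exactly how the paper closes this point.
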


\begin{proof}
The sequence ${\bf z}$ once again satisfies \eqref{2.1} and \eqref{2.2}. By
the proof of Theorem~\ref{thm:2.2} we are done if we can show that the 
right-hand side of \eqref{2.13} approaches 0 as $N\to\infty$. To do so, we
set $z_n=n+c$, and using $|x|\leq\varepsilon$ and $n\geq N+1$, we get
\begin{align}
\left|\frac{(x-z_1)\cdots(x-z_N)}{(z_n-z_1)\cdots(z_n-z_N)}\right|
& \leq \frac{(1+c+\varepsilon)\cdots(N+c+\varepsilon)}{N!}\label{2.19} \\
& =\frac{\Gamma(N+1+c+\varepsilon)}{\Gamma(N+1)\Gamma(1+c+\varepsilon)}.\nonumber
\end{align}
Since
\[
\lim_{N\to\infty}\frac{\Gamma(N+1+c+\varepsilon)}{\Gamma(N+1)(N+1)^{c+\varepsilon}} = 1
\]
by a well-known property of the gamma function, we have
\begin{equation}\label{2.20}
\frac{\Gamma(N+1+c+\varepsilon)}{\Gamma(N+1)\Gamma(1+c+\varepsilon)}
\sim \frac{1}{\Gamma(1+c+\varepsilon)}\cdot N^{c+\varepsilon}.
\end{equation}
On the other hand,
\[
\sum_{n=N+1}^{\infty}\frac{1}{(z_n-x)z_n^\alpha}
\leq \sum_{n=N+1}^{\infty}\frac{1}{(n+c-\varepsilon)(n+c)^\alpha}
\leq \delta\sum_{n=N+1}^{\infty}\frac{1}{(n+c)^{\alpha+1}},
\]
for some constant $\delta>1$. Now, using an integral estimate, we get for
$\alpha>0$,
\begin{equation}\label{2.21}
\sum_{n=N+1}^{\infty}\frac{1}{(z_n-x)z_n^\alpha}
< \delta\int_N^\infty\frac{dx}{(x+c)^{\alpha+1}}
= \frac{\delta}{\alpha}\cdot\frac{1}{(N+c)^\alpha}.
\end{equation}
Combining \eqref{2.19}--\eqref{2.21}, we see that the right-hand side of 
\eqref{2.13} is dominated by a sequence that is asymptotically equivalent to
\[
\frac{\delta}{\alpha\Gamma(1+c+\varepsilon)}\cdot N^{c+\varepsilon-\alpha}.
\]
Hence the right-hand side of \eqref{2.13} approaches 0 as $N\to\infty$
when $\alpha>c+\varepsilon$. This is the case whenever $\alpha>c$ and
$\varepsilon$ is chosen sufficiently small, for instance 
$\varepsilon=(\alpha-c)/2$. The proof of Theorem~\ref{thm:2.5} is now complete.
\end{proof}

\section{Results on Euler sums}

One of the first and most famous result linking a Riemann zeta value with 
values of multiple zeta functions is Euler's identity
\[
\sum_{n=1}^\infty\frac{1}{n^3} 
= 8\sum_{n=1}^\infty\frac{(-1)^n}{n^2}\sum_{j=1}^{n-1}\frac{1}{j},
\]
which can be written as
\begin{equation}\label{4.1}
\zeta(3) = 8\zeta(\overline{2},1);
\end{equation}
see, e.g., \cite{BB}. In \eqref{4.1} and in what follows we use the standard
notation 
\begin{equation}\label{4.2}
\zeta(s_1,\ldots,s_m) 
= \sum_{k_1>\cdots>k_m\geq 1}\frac{1}{k_1^{s_1}}\cdots\frac{1}{k_m^{s_m}},
\end{equation}
with the further convention that an overlined variable, e.g., $\overline{s}_j$,
indicates that $1/k_j^{s_j}$ is replaced by $(-1)^{k_j}/k_j^{s_j}$ in 
\eqref{4.2}. It is also customary to denote $r$ repetitions of a substring $S$
by $\{S\}^r$, with $\{S\}^0$ being the empty string. Thus, for example,
\[
\zeta(\overline{2},\{1\}^3) 
= \sum_{k_1>\cdots>k_4\geq 1}\frac{(-1)^{k_1}}{k_1^2k_2k_3k_4}. 
\]
We are now ready to state and prove the main result of this section.

\begin{theorem}\label{thm:4.1}
For any integer $n\geq 2$ we have
\begin{equation}\label{4.3}
\zeta(n) = (-2)^{n-1}\bigg(2\zeta(\overline{2},\{1\}^{n-2})
+\sum_{j=3}^{n-1}(-1)^j\zeta(\overline{j},\{1\}^{n-j})\bigg).
\end{equation}
\end{theorem}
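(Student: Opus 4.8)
The plan is to specialize the generating-function identity of Theorem~\ref{thm:2.5} to the sequence $z_m=m$ (that is, $c=0$) with $\alpha=1$, so that $\zeta_{\bf z}=\zeta$ and \eqref{2.18} becomes $\sum_{r\ge0}\zeta(r+2)x^r=\sum_{k\ge1}\gamma_k(x)\prod_{\ell=1}^{k-1}(x-\ell)$ for $|x|\le\varepsilon$. Since this is an identity of analytic functions near $x=0$, I would prove \eqref{4.3} by comparing the coefficients of $x^{n-2}$ on both sides: the left-hand side contributes exactly $\zeta(n)$, so the whole argument reduces to expanding the right-hand side as a power series in $x$ and matching the coefficient of $x^{n-2}$ with a combination of Euler sums.

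First I would evaluate $\gamma_k(x)$ explicitly. With $z_m=m$ and $\alpha=1$ we have $(k;k-1)=k!$ and $(m;k)=m(m-1)\cdots(m-k)$, and a telescoping of the type used in Lemma~\ref{lem:2.4} gives $\sum_{m>k}1/(m(m-1)\cdots(m-k))=1/(k\cdot k!)$. Expanding $1/(k-x)$ as a geometric series then yields
\[
\gamma_k(x)=\frac{1}{k!}\Big(\frac{2}{k}+\sum_{q\ge1}\frac{x^q}{k^{q+1}}\Big),
\]
where the pivotal factor $2$ is the sum of the diagonal contribution $1/k$ coming from $1/((z_k-x)(k;k-1))$ and the tail contribution $1/k$ coming from the series in \eqref{2.6}. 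Next I would write $\prod_{\ell=1}^{k-1}(x-\ell)=(-1)^{k-1}(k-1)!\sum_{i\ge0}(-1)^i e_i\big(1,\tf12,\dots,\tf1{k-1}\big)x^i$, the point being that the elementary symmetric function $e_i\big(1,\tf12,\dots,\tf1{k-1}\big)$ is exactly the inner sum $\sum_{k>k_2>\cdots>k_{i+1}\ge1}1/(k_2\cdots k_{i+1})$ that carries the string $\{1\}^i$ inside a multiple zeta value.

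Multiplying these two expansions, extracting the coefficient of $x^{n-2}$ and summing over $k$, I expect to arrive at the intermediate identity
\[
\zeta(n)=2(-1)^{n-1}\zeta(\overline2,\{1\}^{n-2})+\sum_{j=3}^{n}(-1)^{n-j+1}\zeta(\overline j,\{1\}^{n-j}),
\]
the decisive step being the identification $\sum_{k\ge1}\tfrac{(-1)^{k}}{k^{j}}e_{n-j}\big(1,\dots,\tf1{k-1}\big)=\zeta(\overline j,\{1\}^{n-j})$ together with careful tracking of the signs and exponents produced by the Cauchy product. Finally I would isolate the diagonal term $j=n$, which equals $-\zeta(\overline n)=(1-2^{1-n})\zeta(n)$ by the standard relation between $\zeta(\overline n)$ and $\zeta(n)$, move it to the left-hand side, and multiply through by $2^{n-1}$; factoring $(-1)^{n-1}$ out of the remaining sum via $(-1)^{n-j+1}=(-1)^{n-1}(-1)^{j}$ then collapses everything into the stated form \eqref{4.3}.

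The evaluation of $\gamma_k$ and the Cauchy product are routine; the main obstacles are twofold. First, one must justify that the coefficient of $x^{n-2}$ on the right of \eqref{2.18} may be computed term-by-term in $k$, which I expect to follow from the local uniform convergence furnished by Theorem~\ref{thm:2.5} together with the absolute convergence of each resulting Euler sum for $j\ge2$. Second, and more substantially, there is the bookkeeping that recognizes the coefficient sums as the alternating Euler sums $\zeta(\overline j,\{1\}^{n-j})$ and then resums the diagonal term to produce the factor $(-2)^{n-1}$; it is precisely this resummation that eliminates the term $\zeta(\overline n)$ and explains the power of $2$ in \eqref{4.3}.
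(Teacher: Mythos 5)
Your proposal is correct and follows essentially the same route as the paper's own proof: specializing Theorem~\ref{thm:2.5} to $z_n=n$, $\alpha=1$, computing $\gamma_k(x)=\tfrac{1}{k\cdot k!}\bigl(2+\sum_{q\geq 1}x^q/k^q\bigr)$, expanding $\prod_{\ell=1}^{k-1}(x-\ell)$ in the multiple harmonic sums $H_{k-1}(\nu)$ (your elementary symmetric functions $e_i$ are exactly these), extracting the coefficient of $x^{n-2}$, and resumming the diagonal term $\sum_k(-1)^{k-1}/k^n=(1-2^{1-n})\zeta(n)$ to produce the factor $(-2)^{n-1}$. The only differences are notational (the paper keeps the diagonal contribution $1/k^\mu$ outside the Euler-sum list rather than as the $j=n$ term of your sum), so there is nothing to add.
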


Before proving this identity, we note that for $n=2$ it is trivially true, 
and for $n=3, 4, 5$ we get
\begin{align*}
\zeta(3) &= 8\zeta(\overline{2},1),\\
\zeta(4) &= -16\zeta(\overline{2},1,1)+8\zeta(\overline{3},1),\\
\zeta(5) &= 32\zeta(\overline{2},1,1,1)-16\zeta(\overline{3},1,1)
+16\zeta(\overline{4},1).
\end{align*}
The identity \eqref{4.3} could therefore be considered a generalization of
Euler's identity \eqref{4.1}.

\begin{proof}[Proof of Theorem~\ref{thm:4.1}]
We use Theorem~\ref{thm:2.5} with $c=0$ and $\alpha=1$. Then $\zeta(k+2)$ will
be the coefficient of $x^k$ on the right-hand side of \eqref{2.18}.

To evaluate $\gamma_k(x)$ in \eqref{2.6}, we first note that by \eqref{2.3}
we have 
\[
(n;k)_{{\bf z},1}=n(n-1)\cdots(n-k), 
\]
and in particular
$(k;k-1)_{{\bf z},1}=k!$. Furthermore, with a partial fraction expansion similar
to the one in the proof of Lemma~\ref{lem:2.4}, namely
\[
\frac{1}{n(n-1)\cdots(n-k)}
=\frac{1/k}{(n-1)\cdots(n-k)}-\frac{1/k}{n\cdots(n-k+1)},
\]
we get a telescoping series which sums as
\[
\sum_{n=k+1}^\infty\frac{1}{(n;k)_{{\bf z},1}} = \frac{1}{k\cdot k!}.
\]
Hence
\begin{align}
\gamma_k(x) &= \frac{1}{k-x}\cdot\frac{1}{k!}+\frac{1}{k\cdot k!}
= \frac{1}{k\cdot k!}\bigg(\frac{1}{1-\frac{x}{k}}+1\bigg)\label{4.4}\\
&=\frac{1}{k\cdot k!}\bigg(2+\sum_{j=1}^\infty\frac{x^j}{k^j}\bigg).\nonumber
\end{align}
To deal with the right-hand side of \eqref{2.18}, which we call $R(x)$, we 
use the finite multiple sums 
\[
H_K(m):=\sum_{K\geq k_1>\cdots>k_m\geq 1}\frac{1}{k_1\cdots k_m}\qquad(m\geq 1),
\]
which are sometimes called hyperharmonic numbers, with the convention 
$H_K(0)=1$ for all integers $K\geq 1$. Note that $H_K(1)=H_K$, the $K$th 
harmonic number. With this notation we have
\begin{align*}
\prod_{\ell=1}^{k-1}(x-z_\ell) &= \prod_{\ell=1}^{k-1}(x-\ell)
= (-1)^{k-1}(k-1)!\prod_{\ell=1}^{k-1}\left(1-\tfrac{x}{\ell}\right)\\
&= (-1)^{k-1}(k-1)!\sum_{\nu=0}^{k-1}(-1)^\nu H_{k-1}(\nu)x^\nu.
\end{align*}
Upon multiplying this by \eqref{4.4}, we get
\begin{align*}
R(x) &= \sum_{k=1}^\infty\frac{(-1)^{k-1}}{k^2}
\bigg(2+\sum_{j=1}^\infty\frac{x^j}{k^j}\bigg)
\bigg(\sum_{\nu=0}^{k-1}(-1)^\nu H_{k-1}(\nu)x^\nu\bigg)\\
 &= \sum_{k=1}^\infty\frac{(-1)^{k-1}}{k^2}S_k(x),
\end{align*}
where
\[
S_k(x) = 2\sum_{\nu=0}^{k-1}(-1)^\nu H_{k-1}(\nu)x^\nu
+\sum_{\mu=1}^\infty
\bigg(\sum_{j=1}^\mu\frac{(-1)^{\mu-j}}{k^j}H_{k-1}(\mu-j)\bigg)x^\mu.
\]
Hence, by equating coefficients of powers of $x$ on both sides of \eqref{2.18},
we get
%\begin{equation}\label{4.5}
%\zeta(\mu+2) = \sum_{k=1}^\infty\frac{(-1)^{k-1}}{k^2}T_{k,\mu},
%\end{equation}
%where
\[
\zeta(\mu+2) = \sum_{k=1}^\infty\frac{(-1)^{k-1}}{k^2}
\bigg(2(-1)^\mu H_{k-1}(\mu)
+\sum_{j=1}^{\mu-1}\frac{(-1)^{\mu-j}}{k^j}H_{k-1}(\mu-j)+\frac{1}{k^\mu}\bigg).
\]
%\[
%T_{k,\mu}=2(-1)^\mu H_{k-1}(\mu)
%+\sum_{j=1}^{\mu-1}\frac{(-1)^{\mu-j}}{k^j}H_{k-1}(\mu-j)+\frac{1}{k^\mu}.
%\]
So with this and the definition of Euler sums we get
\begin{align}
\zeta(\mu+2) &= 2(-1)^{\mu-1}\zeta(\overline{2},\{1\}^\mu)\label{4.5}\\
&\quad+\sum_{j=1}^{\mu-1}(-1)^{\mu-j-1}\zeta(\overline{j+2},\{1\}^{\mu-j})
+\sum_{k=1}^\infty\frac{(-1)^{k-1}}{k^{\mu+2}}.\nonumber
\end{align}
The last sum on the right of \eqref{4.5} can be rewritten as
\[
\sum_{k=1}^\infty\frac{(-1)^{k-1}}{k^{\mu+2}}
=\sum_{k=1}^\infty\frac{1}{k^{\mu+2}}-2\sum_{k=1}^\infty\frac{1}{(2k)^{\mu+2}}
= \bigg(1-\frac{1}{2^{\mu+1}}\bigg)\zeta(\mu+2).
\]
Combining this with \eqref{4.5}, we get
\begin{equation}\label{4.6}
\zeta(\mu+2) = (-1)^{\mu-1}2^{\mu+2}\zeta(\overline{2},\{1\}^\mu)
+2^{\mu+1}\sum_{j=1}^{\mu-1}(-1)^{\mu-j-1}\zeta(\overline{j+2},\{1\}^{\mu-j}).
\end{equation}
Finally, we replace $\mu$ by $n-2$ ($n\geq 2$) and shift the summation on the
right of \eqref{4.6} by 2. Then \eqref{4.6} immediately gives \eqref{4.3}, and
the proof is complete.
\end{proof}

We now present an alternative proof of Theorem~\ref{thm:4.1}, based on a
recent result of Xu \cite[Theorem~2.1]{Xu} who showed that
\begin{equation}\label{4.8}
\zeta\big(\overline{k+2},\{1\}^{m-1}\big)=\frac{(-1)^{m+k}}{m!k!}
\int_0^1(\log x)^k(\log(1+x))^{m}\frac{dx}{x}.
\end{equation}
For $n\geq 2$ we denote by $S_n$ the expression in large parentheses in 
\eqref{4.3}, namely
\begin{equation}\label{4.9}
S_n := 2\zeta(\overline{2},\{1\}^{n-2})
+\sum_{j=3}^{n-1}(-1)^j\zeta(\overline{j},\{1\}^{n-j}),
\end{equation}
and we prove the following result.

\begin{theorem}\label{thm:4.2}
We have the generating function
\begin{equation}\label{4.10}
H(z) := \sum_{n=2}^\infty S_nz^{n-1} = \psi(1)-\psi(1+\tfrac{z}{2}),
\end{equation}
where $\psi(z)=\Gamma'(z)/\Gamma(z)$ is the digamma function.
\end{theorem}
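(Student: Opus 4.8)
The plan is to substitute Xu's integral representation \eqref{4.8} into the definition \eqref{4.9} of $S_n$ and then sum the resulting integrals against $z^{n-1}$, exploiting the fact that the powers of $z$ will collapse the integrands into exponentials. Writing $\zeta(\overline{2},\{1\}^{n-2})$ as the instance $k=0$, $m=n-1$ of \eqref{4.8}, and $\zeta(\overline{j},\{1\}^{n-j})$ (for $3\le j\le n-1$) as the instance $k=j-2$, $m=n-j+1$, one checks that in every term the exponent $m+k$ equals $n-1$, so each Euler sum in $S_n$ carries a common sign $(-1)^{n-1}$ and a reciprocal-factorial weight. Thus $S_n=(-1)^{n-1}\int_0^1 Q_n(\log x,\log(1+x))\frac{dx}{x}$ for an explicit polynomial $Q_n$, and $S_n z^{n-1}=(-z)^{n-1}\int_0^1 Q_n\,\frac{dx}{x}$.

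Next I would form $H(z)=\sum_{n\ge2}S_n z^{n-1}$ and interchange summation with integration, which is legitimate for $z$ near $0$ since the estimates underlying \eqref{4.8} make the double series absolutely convergent there. The contribution of the first terms sums to $2\bigl(e^{-z\log(1+x)}-1\bigr)$, while the double sum, reindexed by $p=j-2\ge1$ and $q=n-j+1\ge2$, factors as $\bigl(e^{z\log x}-1\bigr)\bigl(e^{-z\log(1+x)}-1+z\log(1+x)\bigr)$. Recognizing $e^{z\log x}=x^z$ and $e^{-z\log(1+x)}=(1+x)^{-z}$ and combining the two pieces, the bracket collapses, with $A=(1+x)^{-z}$ and $B=x^z$, to $(A-1)(B+1)+(B-1)z\log(1+x)$, so that
\[
H(z)=\int_0^1\frac{\bigl((1+x)^{-z}-1\bigr)\bigl(x^z+1\bigr)+(x^z-1)\,z\log(1+x)}{x}\,dx.
\]

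The final and hardest step is to evaluate this integral in closed form and identify it with $\psi(1)-\psi(1+\tfrac z2)$. My approach would be to use the representation $\psi(1+w)-\psi(1)=\int_0^1\frac{1-t^w}{1-t}\,dt$ together with the substitution $t=x/(1+x)$, which sends $x^z(1+x)^{-z}$ to $t^z$ and $\frac{dx}{x}$ to $\frac{dt}{t(1-t)}$; splitting $\frac1{t(1-t)}=\frac1t+\frac1{1-t}$ and regrouping the individually divergent pieces into convergent combinations then reduces everything to integrals of power–logarithm type. The main obstacle is precisely the emergence of the halved argument $z/2$: after the substitution the new variable ranges only over $(0,\tfrac12)$, and reconciling this restricted range with the full digamma representation on $(0,1)$ — presumably through the duplication formula for $\psi$ or an even/odd splitting of the series — is where the real work lies. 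As a check I would expand both sides to low order in $z$ and compare with directly computed Euler sums, paying particular attention to the $z^1$ (that is, $n=2$) term, which should be verified on its own.
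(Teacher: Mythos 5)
Your first two steps are sound and run exactly parallel to the paper's own argument (Lemma~\ref{lem:4.4} followed by the interchange leading to \eqref{4.17}): substitute Xu's formula \eqref{4.8}, sum against $z^{n-1}$, and collapse the powers of logarithms into exponentials. But the decisive step --- the closed-form evaluation of the resulting integral --- is never carried out in your proposal; you yourself say that producing the halved argument $z/2$ is ``where the real work lies.'' That evaluation is precisely the substance of the paper's proof: it splits the integrand of \eqref{4.17} into the pieces $\frac{1+x^z}{(1+x)^z}-1$, $zx^z\log(1+x)$ and $-x^z$, evaluates the first by Lemma~\ref{lem:4.3} (which is itself proved not by substitution but by a functional-equation argument, comparing $I(z+1)-I(z)=-1/z$ with $\psi(z+1)-\psi(z)=1/z$ and invoking Muldoon's uniqueness lemma), evaluates the second by the table integral $\int_0^1 x^{z-1}\log(1+x)\,dx=\frac{\log 2}{z}-\frac{1}{2z}\left(\psi(\tfrac{z+2}{2})-\psi(\tfrac{z+1}{2})\right)$ --- this is where the half-arguments actually enter --- and then assembles everything with the duplication formula $\psi(2y)=\tfrac12\left(\psi(y)+\psi(y+\tfrac12)\right)+\log 2$. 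Your proposed route via $\psi(1+w)-\psi(1)=\int_0^1\frac{1-t^w}{1-t}\,dt$ and $t=x/(1+x)$ is a plan with an acknowledged unresolved obstacle, not a proof.

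There is also a concrete discrepancy in your integrand. Writing $A=(1+x)^{-z}$, $B=x^z$, $L=z\log(1+x)$, your expression $(A-1)(B+1)+(B-1)L$ equals $A+AB-B-1+BL-L$, whereas the paper's \eqref{4.17} has $A+AB-B-1+BL$; the difference integrates to $-\int_0^1 z\log(1+x)\tfrac{dx}{x}=-\tfrac{\pi^2}{12}z$. The source is the $n=2$ term: under the literal empty-sum reading of \eqref{4.9}, which you use, $S_2=2\zeta(\overline{2})=-\pi^2/6$, while consistency with \eqref{4.11} (equivalently, Theorem~\ref{thm:4.1} at $n=2$) forces $S_2=-\zeta(2)/2=-\pi^2/12$. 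The paper's Lemma~\ref{lem:4.4} silently produces this latter value, because its binomial-theorem step is valid only for $n\geq 3$ and at $n=2$ changes the term by exactly $+\pi^2/12$. Consequently, if you pushed your computation through correctly you would arrive at $\psi(1)-\psi(1+\tfrac{z}{2})-\tfrac{\pi^2}{12}z$, contradicting \eqref{4.10}; the identity as stated holds only if the vacuous sum $\sum_{j=3}^{1}$ in \eqref{4.9} is read with reversed limits as $-\zeta(\overline{2})$. You were right to flag the $n=2$ coefficient for separate verification, but under your conventions that check fails rather than succeeds, and resolving it is a necessary part of any complete proof.
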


Before we prove this, we note that a well-known generating function for the
Riemann zeta function implies
\begin{equation}\label{4.11}
\sum_{n=2}^\infty\zeta(n)(-\tfrac{z}{2})^{n-1} = \psi(1)-\psi(1+\tfrac{z}{2});
\end{equation}
see, e.g., \cite[Eq.~25.8.5]{DLMF}. Comparing \eqref{4.10} with \eqref{4.11},
we immediately obtain Theorem~\ref{thm:4.1}. We mention in passing that
$\psi(1)=-\gamma$, where $\gamma$ is the Euler-Mascheroni constant.

For the proof of Theorem~\ref{thm:4.2} we require two lemmas; the first one
is the evaluation of an integral which we were unable to find in the literature.

\begin{lemma}\label{lem:4.3}
For any $z\in{\mathbb R}$ with $z>0$ we have
\begin{equation}\label{4.12}
\int_0^1\left(\frac{1+x^z}{(1+x)^z}-1\right)\frac{dx}{x}
= \psi(1)-\psi(z).
\end{equation}
\end{lemma}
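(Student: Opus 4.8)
The plan is to split the integrand into two pieces, each of which can be transformed by a single elementary substitution onto one of the two halves $[0,\tfrac12]$ and $[\tfrac12,1]$, and then to recognize the combined result as Gauss's integral representation of the digamma function split at the point $1/2$. Throughout I assume $z>0$, which guarantees convergence: near $x=0$ the bracketed factor behaves like $-zx+x^z$, so after division by $x$ the integrand is $O(x^{z-1})$, which is integrable precisely for $z>0$. Writing the left-hand side as $I(z)$, I first use the elementary splitting
\[
\frac{1+x^z}{(1+x)^z}-1=\left(\frac{1}{(1+x)^z}-1\right)+\frac{x^z}{(1+x)^z},
\]
which gives $I(z)=A+B$ with $A=\int_0^1\frac1x\left(\frac{1}{(1+x)^z}-1\right)dx$ and $B=\int_0^1\frac{x^{z-1}}{(1+x)^z}\,dx$; one checks that each converges separately for $z>0$.

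Next I would transform both integrals onto complementary halves of $[0,1]$. In $B$ the substitution $u=x/(1+x)$ (so that $1/(1+x)=1-u$ and $dx=du/(1-u)^2$) sends $[0,1]$ to $[0,\tfrac12]$ and yields $B=\int_0^{1/2}\frac{u^{z-1}}{1-u}\,du$. In $A$ the substitution $t=1/(1+x)$ (so that $1/x=t/(1-t)$ and $dx=-dt/t^2$) sends $[0,1]$ to $[\tfrac12,1]$ and yields $A=\int_{1/2}^1\frac{t^z-1}{t(1-t)}\,dt$. For the right-hand side I would invoke Gauss's formula $\psi(z)=\psi(1)+\int_0^1\frac{1-t^{z-1}}{1-t}\,dt$ (valid for $z>0$), rewrite it as
\[
\psi(1)-\psi(z)=\int_0^1\frac{t^{z-1}-1}{1-t}\,dt,
\]
and split the integral at $1/2$. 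On $[0,\tfrac12]$ the integrand decomposes as $\frac{t^{z-1}}{1-t}-\frac{1}{1-t}$, whose second part integrates to $\ln2$, so the lower half equals $B-\ln2$. It therefore remains to prove the single identity $A=-\ln2+\int_{1/2}^1\frac{t^{z-1}-1}{1-t}\,dt$.

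This last step is purely elementary. Using the partial fraction $\frac{1}{t(1-t)}=\frac1t+\frac1{1-t}$, I split $A=\int_{1/2}^1\frac{t^z-1}{t}\,dt+\int_{1/2}^1\frac{t^z-1}{1-t}\,dt$, where the first integral evaluates to $\frac{1-2^{-z}}{z}-\ln2$. Subtracting the claimed expression then reduces the whole identity to
\[
\frac{1-2^{-z}}{z}+\int_{1/2}^1\frac{t^z-t^{z-1}}{1-t}\,dt=0,
\]
and since $\frac{t^z-t^{z-1}}{1-t}=-t^{z-1}$, the remaining integral equals $-\frac{1-2^{-z}}{z}$, producing the required cancellation. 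The genuine content of the argument is not any single computation but the bookkeeping: arranging that the two substituted integrals stitch together across $t=\tfrac12$ into the complete digamma integral, and tracking the stray $\ln2$ and $\frac{1-2^{-z}}{z}$ terms so that they cancel exactly. The only subtlety to watch is the justification of the initial split into two separately convergent pieces, which is exactly what forces the hypothesis $z>0$.
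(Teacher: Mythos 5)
Your proof is correct, and it takes a genuinely different route from the paper's. The paper argues structurally: it computes the difference $I(z+1)-I(z)=-\tfrac{1}{z}$ (via an entry in Prudnikov's tables), matches this against the recurrence $\psi(z+1)-\psi(z)=\tfrac{1}{z}$, and then invokes a uniqueness theorem of Muldoon for nondecreasing solutions of such difference equations --- which requires the additional step of checking that $-I(z)$ is nondecreasing in $z$ --- before fixing the constant with $I(1)=0$. You instead evaluate the integral directly: the split $I(z)=A+B$, the substitutions $u=x/(1+x)$ and $t=1/(1+x)$ carrying $B$ and $A$ onto $[0,\tfrac12]$ and $[\tfrac12,1]$ respectively, and Gauss's representation $\psi(1)-\psi(z)=\int_0^1\frac{t^{z-1}-1}{1-t}\,dt$ (see \cite[Eq.~5.9.16]{DLMF}) split at $t=\tfrac12$, after which everything reduces to the cancellation of the stray $\ln 2$ and $\frac{1-2^{-z}}{z}$ terms. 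I verified the substitutions, the separate convergence of $A$ and $B$ for $z>0$, and the final cancellation via $\frac{t^z-t^{z-1}}{1-t}=-t^{z-1}$; all are correct. What your approach buys: it is a one-pass, purely computational proof, self-contained modulo one standard integral representation of $\psi$, with no appeal to a uniqueness theorem, no monotonicity verification, and no normalization step. What the paper's approach buys: it needs only the difference equation and monotonicity, so it does not depend on finding substitutions that stitch the integral onto the digamma representation, and it exhibits a reusable technique for identifying a given function with $\psi$ up to a constant.
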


\begin{proof}
We denote the integral on the left of \eqref{4.12} by $I(z)$. Then it is easy 
to verify that 
\begin{equation}\label{4.13}
I(z+1)-I(z) = - \int_0^1\frac{1+x^{z-1}}{(1+x)^{z+1}}dx = -\frac{1}{z},
\end{equation}
where the second equality comes from a special case of the identity 2.2.2.14
in \cite{PrE}.

We now compare \eqref{4.13} with the well-known functional equation 
\begin{equation}\label{4.14}
\psi(z+1)-\psi(z) = \frac{1}{z},
\end{equation}
(see, e.g., \cite[Eq.~5.5.2]{DLMF}) and apply the following uniqueness result:

{\it Suppose that $\varphi$ is defined on $[N,\infty)$ for some non-negative
integer $N$ and that $\varphi\to 0$ as $x\to\infty$. Then if there exist two
nondecreasing solutions of the equation $f(z+1)-f(z) = \varphi(x)$, they must
differ by a constant.}

This can be found in \cite[Lemma~1.1]{Mu}, where further references are given.
To apply this result, we set $f(z)=-I(z)$ and note that for $z>0$ and $x>0$,
\[
\frac{d}{dz}\left(\frac{1+x^z}{(1+x)^z}-1\right)
=\frac{x^z}{(1+x)^z}\big(\log{x}-\log(1+x)\big)-\frac{\log(1+x)}{(1+x)^z} < 0.
\]
Hence $f(z)$ is nondecreasing, and comparing \eqref{4.13} with \eqref{4.14} we
therefore have $f(z)=-I(z)=\psi(z)+C$ for some constant $C$. We determine $C$
by setting $z=1$; since $I(1)=0$, we have $C=-\psi(1)$, which completes the
proof. 
\end{proof}

\begin{lemma}\label{lem:4.4}
For any integer $n\geq 2$ we have
\begin{align}
S_n = \frac{1}{(n-1)!}\int_{0}^{1}\bigg(&\left(\log\frac{1}{1+x}\right)^{n-1}
-(\log x)^{n-1}+\left(\log\frac{x}{1+x}\right)^{n-1}\label{4.15}\\
&+(n-1)(\log x)^{n-2}\log(1+x)\bigg)\frac{dx}{x}.\nonumber
\end{align}
\end{lemma}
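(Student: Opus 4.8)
The plan is to run each multiple zeta value occurring in $S_n$ through Xu's integral representation \eqref{4.8} and then reassemble the resulting integrals into the single integral on the right of \eqref{4.15}. The essential bookkeeping is the parameter matching: writing a term $\zeta(\overline{j},\{1\}^{n-j})$ in the shape $\zeta(\overline{k+2},\{1\}^{m-1})$ forces $k=j-2$ and $m=n-j+1$, and every term ($2\le j\le n-1$) then lies in the admissible range $k\ge 0$, $m\ge 1$. The decisive observation is that $m+k=n-1$ for all $j$, so the sign $(-1)^{m+k}=(-1)^{n-1}$ supplied by \eqref{4.8} is the \emph{same} for every term. Pulling out the common factor and using $\tfrac{(n-1)!}{(n-j+1)!(j-2)!}=\binom{n-1}{j-2}$, I obtain
\[
S_n=\frac{(-1)^{n-1}}{(n-1)!}\int_0^1\Big(2(\log(1+x))^{n-1}+\sum_{j=3}^{n-1}(-1)^{j}\binom{n-1}{j-2}(\log x)^{j-2}(\log(1+x))^{n-j+1}\Big)\frac{dx}{x}.
\]

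Next I would recognise the $j$-sum as an incomplete binomial expansion. With $a=\log x$, $b=\log(1+x)$ and $i=j-2$, it is the portion $1\le i\le n-3$ of $(b-a)^{n-1}=\sum_{i=0}^{n-1}(-1)^i\binom{n-1}{i}a^ib^{n-1-i}$, missing exactly the three boundary terms $i=0,\,n-2,\,n-1$. I would add and subtract those three terms to complete the power $(b-a)^{n-1}=(\log\tfrac{1+x}{x})^{n-1}$, combine the recovered $i=0$ term $b^{n-1}$ with the $2b^{n-1}$ coming from $j=2$ (leaving a net $b^{n-1}$), and simplify the signs using $-b=\log\tfrac{1}{1+x}$ and $a-b=\log\tfrac{x}{1+x}$. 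After carrying the prefactor $(-1)^{n-1}$ inside (integer exponents make $(-1)^{n-1}b^{n-1}=(-b)^{n-1}$, etc.), the four surviving pieces collapse to $(\log\tfrac{1}{1+x})^{n-1}$, $-(\log x)^{n-1}$, $(\log\tfrac{x}{1+x})^{n-1}$ and $(n-1)(\log x)^{n-2}\log(1+x)$, which is precisely the integrand of \eqref{4.15}.

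The step demanding genuine care is convergence, and it is also what explains the exact shape of \eqref{4.15}. Each integral issuing from \eqref{4.8} converges because its integrand carries a factor $(\log(1+x))^m$ with $m\ge 1$, which vanishes like $x^m$ at the origin and absorbs the $\tfrac{1}{x}$; but the \emph{recombined} blocks $\int_0^1(b-a)^{n-1}\tfrac{dx}{x}$ and $\int_0^1(\log x)^{n-1}\tfrac{dx}{x}$ diverge individually. Consequently all of the binomial manipulation must be performed pointwise on the integrand under a single integral sign, never splitting it into separate integrals. The three boundary terms $i=0,\,n-2,\,n-1$ are exactly what cancel the non-integrable leading singularities $(\log x)^{n-1}$ and $(\log x)^{n-2}\log(1+x)$ near $x=0$: a short Taylor expansion of $(\log\tfrac{x}{1+x})^{n-1}=(a-b)^{n-1}$ against $-(\log x)^{n-1}$ and $(n-1)(\log x)^{n-2}\log(1+x)$ shows the two most singular contributions annihilate, so that the surviving integrand times $\tfrac{1}{x}$ is integrable at the origin. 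Finally, the index bookkeeping presupposes that $0$, $n-2$, $n-1$ are distinct, i.e. $n\ge 3$; the base case $n=2$, where these indices collapse, I would handle separately by direct substitution of $S_2=2\zeta(\overline{2})$ into \eqref{4.8}.
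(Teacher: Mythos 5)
For $n\ge 3$ your argument coincides with the paper's own proof: substitute Xu's formula \eqref{4.8} into \eqref{4.9} (with the key observation that $m+k=n-1$ makes the sign $(-1)^{n-1}$ common to all terms), recognize the incomplete binomial expansion in $a=\log x$, $b=\log(1+x)$, complete it with the boundary terms $i=0,\,n-2,\,n-1$, and fold $(-1)^{n-1}$ into the logarithms. Your two points of care are both correct and are left implicit in the paper: the completion must be carried out pointwise on the integrand (the completed blocks are not separately integrable at $x=0$, but the recovered $i=n-2,\,n-1$ terms cancel the leading singularities of $(a-b)^{n-1}$ there), and the completion presupposes that $0$, $n-2$, $n-1$ are three distinct indices, i.e.\ $n\ge 3$.

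The gap is your final sentence: the case $n=2$ cannot be settled by direct substitution, because \eqref{4.15} is false for $n=2$. By \eqref{4.8} with $k=0$, $m=1$,
\[
S_2=2\zeta(\overline{2})=-2\int_0^1\frac{\log(1+x)}{x}\,dx=-\frac{\pi^2}{6},
\]
whereas at $n=2$ the integrand of \eqref{4.15} collapses pointwise to $-\log(1+x)$, so its right-hand side equals $-\pi^2/12$: exactly half the correct value. The culprit is the degenerate completion you flagged: for $n=2$ the index $i=0$ coincides with $i=n-2$, so the ``three'' boundary terms remove the constant term twice. This is a defect of the statement you were asked to prove rather than of your main argument; the paper's own proof makes the same slip silently (for $n=2$ the displayed sum in its binomial identity is empty while the claimed closed form equals $-\log(1+x)\ne 0$), and the error propagates to the coefficient of $z$ in \eqref{4.10} and to the case $n=2$ of \eqref{4.3}, which would assert $\zeta(2)=-4\zeta(\overline{2})=\pi^2/3$ instead of the true $\zeta(2)=-2\zeta(\overline{2})$. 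The correct resolution is to state the lemma (and these downstream claims) for $n\ge 3$, where your proof is complete, rather than to promise a verification at $n=2$ that would in fact refute it.
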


\begin{proof}
Substituting \eqref{4.8} into \eqref{4.9}, we get
\begin{align}
S_n &= \frac{(-1)^{n-1}}{(n-1)!}\bigg(2\int_0^1\big(\log(1+x)\big)^{n-1}\frac{dx}{x}\label{4.16}\\
&+\int_0^1\bigg(\sum_{j=3}^{n-1}(-1)^j\binom{n-1}{j-2}(\log{x})^{j-2}
\big(\log(1+x)\big)^{n-j+1}\bigg)\bigg)\frac{dx}{x}.\nonumber
\end{align}
The sum in the second term is now seen to be
\begin{align*}
\sum_{j=1}^{n-3}&(-1)^j\binom{n-1}{j}(\log{x})^j\left(\log(1+x)\right)^{n-1-j}
= \big(\log(1+x)-\log{x}\big)^{n-1} \\
&-\log(1+x)^{n-1}-(-1)^n\left((n-1)(\log{x})^{n-2}\log(1+x)-(\log{x})^{n-1}\right),
\end{align*}
and combining this with \eqref{4.16} we obtain \eqref{4.15}.
\end{proof}

We are now ready to prove Theorem~\ref{thm:4.2}.

\begin{proof}[Proof of Theorem~\ref{thm:4.2}]
We substitute \eqref{4.15} into the infinite series \eqref{4.10} and interchange
the order of summation and integration, which is easy to justify. Using the
evaluations
\begin{align*}
\sum_{n=2}^\infty\frac{z^{n-1}}{(n-1)!}\left(\log\frac{1}{1+x}\right)^{n-1}
&=\frac{1}{\left(1+x\right)^{z}}-1,\\
\sum_{n=2}^\infty\frac{z^{n-1}}{(n-1)!}\left(\log x\right)^{n-1} & =x^{z}-1,\\
\sum_{n=2}^\infty\frac{z^{n-1}}{(n-1)!}\left(\log\frac{x}{1+x}\right)^{n-1}
&=\left(\frac{x}{1+x}\right)^{z}-1,
\end{align*}
and
\[
\sum_{n=2}^\infty\frac{z^{n-1}}{(n-1)!}(n-1)(\log x)^{n-2}\log(1+x)
=z\log(1+x)x^{z},
\]
we find
\begin{equation}\label{4.17}
H(z)=\int_{0}^{1}\left(\frac{1+x^{z}}{(1+x)^z}-1+x^z\big(z\log(1+x)-1\big)\right)\frac{dx}{x}.
\end{equation}
Splitting this integral into three parts, we use Lemma~\ref{lem:4.3} as well as
the integral
\[
\int_0^1 x^{z-1}\log(1+x)dx 
=\frac{\log{2}}{z}-\frac{1}{2z}\left(\psi(\tfrac{z+2}{2})-\psi(\tfrac{z+1}{2})\right)
\]
(see \cite[Eq.~4.293.1]{GR}) and the obvious integral
\[
\int_0^1 x^{z-1}dx = \frac{1}{z}.
\]
Substituting all this into \eqref{4.17}, we get
\[
H(z) = \psi(1)-\psi(z)+\log{2}-\tfrac{1}{2}\psi(\tfrac{z}{2}+1)
+\tfrac{1}{2}\psi(\tfrac{z+1}{2})-\tfrac{1}{z}.
\]
Finally, using the identities \eqref{4.14} and 
$\psi(2y)=\frac{1}{2}\big(\psi(y)+\psi(y+\frac{1}{2})\big)+\log{2}$
(see \cite[Eq.~5.5.8]{DLMF}) with $y=(z+1)/2$, we get
\[
H(z) = 
\psi(1)-\psi(z+1)+\left(\psi(z+1)-2\cdot\tfrac{1}{2}\psi(\tfrac{z}{2}+1)\right),
\]
which is the right-hand side of \eqref{4.10}, as required.
\end{proof}

%!!
\section{Generalizing the Markov-Ap\'ery identity}

We recall that Koecher's identity \eqref{1.3} was obtained from 
Theorem~\ref{thm:2.3} as a special case by taking $z_n=n^2$. The identity
\eqref{1.1} then followed by setting $x=0$. In this section we will again use
Theorem~\ref{thm:2.3}, with $x=0$ from the beginning but with $z_n=(n+c)^2$
for integers $c\geq 0$. For the remainder of this section we set
\begin{equation}\label{5.0}
\zeta_c(s):= \zeta(s)-\sum_{j=1}^c\frac{1}{j^s},\qquad c=0,1,2,\ldots;
\end{equation}
in particular, $\zeta_0(s)=\zeta(s)$ and $\zeta_1(s)=\zeta(s)-1$.
Also, $\zeta_c(s)$ is in fact the Hurwitz zeta function $\zeta(s,c)$.
We are now ready to state the main result of this section.

\begin{theorem}\label{thm:5.1}
If $c\geq 0$ is an integer, then
\begin{equation}\label{5.1}
\zeta_c(3) = \frac{1}{2c!^2}\sum_{k=1}^\infty
\frac{(-1)^{k-1}}{\binom{2k+2c}{k+c}}\cdot\frac{P_c(k)}{(k+c)^2k(k+1)\cdots(k+c)},
\end{equation}
%where $P_c(k)$ is a polynomial in $k$ of degree $3c$, with leading coefficient
%5 and, for $c\geq 1$, constant coefficient $c!(2c)!$.
%Furthermore, the polynomials $P_c(k)$ are explicitly given by
where
\begin{align}
P_c(k)&=\frac{4(k+2c)!(k+c-1)!}{(k+c)(k-1)!^2}
+\frac{2(k+c)!(2k+2c)!}{(k-1)!}\sum_{j=0}^{2c}\frac{(2c-j)!}{j!(2k+2c-j)!}\label{5.2}\\
&\quad\times\sum_{\nu=0}^j\frac{(-1)^\nu}{k+c-\nu}\binom{j}{\nu}
\frac{(2k+j-1-\nu)!(k+2c-\nu)!}{(k-1-\nu)!(2k+2c-\nu)!}.\nonumber
\end{align}
\end{theorem}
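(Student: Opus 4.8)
The plan is to specialize Theorem~\ref{thm:2.3} to the sequence $z_n=(n+c)^2$ with $\alpha=\tfrac{1}{2}$, and then to set $x=0$. All hypotheses of Theorem~\ref{thm:2.3} hold ($\beta=2>1$, $d=0$, $c>-1$, $\alpha=\tfrac{1}{2}\ge0$), so \eqref{2.15} is available, and since $0<\min\{1,z_1\}$ we may evaluate both sides there. On the left the series collapses to its $m=0$ term, which by \eqref{2.4} and \eqref{5.0} is $\zeta_{\bf z}(\tfrac{3}{2})=\sum_{n\ge1}(n+c)^{-3}=\zeta_c(3)$, exactly the left-hand side of \eqref{5.1}. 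The right-hand side becomes $\sum_{k\ge1}\gamma_k(0)\prod_{\ell=1}^{k-1}(-z_\ell)$, so the theorem is the statement that this series equals the right-hand side of \eqref{5.1}.

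Next I would unwind the right-hand side. The product is immediate: $\prod_{\ell=1}^{k-1}(-z_\ell)=(-1)^{k-1}\big((k+c-1)!/c!\big)^2$. From \eqref{2.3} one computes $(n;k)_{{\bf z},1/2}=(n+c)\prod_{i=1}^{k}(n-i)(n+i+2c)$, and in particular $(k;k-1)_{{\bf z},1/2}=(k+c)(k-1)!\,(2k+2c-1)!/(k+2c)!$. Substituting these into \eqref{2.6} splits $\gamma_k(0)$ into the elementary rational term $\tfrac{1}{z_k}\tfrac{1}{(k;k-1)}$ and the infinite series $B_k:=\sum_{n\ge k+1}1/(n;k)_{{\bf z},1/2}$. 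A direct calculation shows that the rational term, after multiplication by the product above and by the normalising factor $\tfrac{1}{2c!^2}\binom{2k+2c}{k+c}^{-1}\big/\big[(k+c)^2k(k+1)\cdots(k+c)\big]$ displayed in \eqref{5.1}, produces precisely the first summand $4(k+2c)!(k+c-1)!\big/\big[(k+c)(k-1)!^2\big]$ of $P_c(k)$ in \eqref{5.2}. (This is the analogue of the easy part of the derivation of \eqref{1.3}.) With the same normalisation the leftover factors cancel completely, so the entire content of the theorem reduces to the single assertion that $B_k$ equals the double sum appearing in \eqref{5.2}.

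The heart of the proof, and the main obstacle, is therefore the evaluation of $B_k$. Writing $m=n+c$ one has $1/(n;k)_{{\bf z},1/2}=\tilde G(n)/F(n)$, where $F(n)=\prod_{\ell=n-k}^{n+k+2c}\ell$ is a product of $2k+2c+1$ consecutive integers and $\tilde G(n)=\prod_{\ell=n,\ \ell\neq n+c}^{n+2c}\ell$ is the degree-$2c$ polynomial obtained by deleting the central factor $n+c$. Since $1/F(n)$ telescopes exactly as in Lemma~\ref{lem:2.4}, my plan is to expand $\tilde G(n)$ in a factorial basis adapted to the two ends of $F(n)$, indexed by $j=0,\ldots,2c$, and to telescope each resulting piece; the surviving boundary contributions at the lower limit $n=k+1$ generate the outer sum $\sum_{j=0}^{2c}$ together with the weight $(2c-j)!/[j!(2k+2c-j)!]$. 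The deleted central factor $1/(n+c)$ is best handled separately, either by the integral representation $1/(n+c)=\int_0^1 t^{n+c-1}\,dt$ or by a partial-fraction/finite-difference expansion; either route produces the inner alternating sum $\sum_{\nu=0}^{j}(-1)^\nu\binom{j}{\nu}/(k+c-\nu)$, the denominator $k+c-\nu$ arising precisely because the telescoped antidifference is evaluated at $n=k+1$. I expect the bookkeeping here to be the delicate step: one must track the factorial ratios $(2k+j-1-\nu)!(k+2c-\nu)!/[(k-1-\nu)!(2k+2c-\nu)!]$ through the telescoping and confirm the exact ranges of $j$ and $\nu$.

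Finally I would reassemble: multiply the resulting double-sum expression for $B_k$ by $(-1)^{k-1}\big((k+c-1)!/c!\big)^2$, add the already-matched rational term, and factor out the common normalisation to read off $P_c(k)$ exactly as in \eqref{5.2}, whence \eqref{5.1} follows. The absolute convergence needed both to evaluate \eqref{2.15} termwise at $x=0$ and to interchange the finite sums over $j$ and $\nu$ with the infinite sum defining $B_k$ is guaranteed by the conditions of Theorem~\ref{thm:2.3}, so no separate convergence argument is required.
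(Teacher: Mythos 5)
Your reduction is correct and coincides exactly with the paper's: specialize Theorem~\ref{thm:2.3} to $z_n=(n+c)^2$, $\alpha=\tfrac12$, $x=0$; compute $\prod_{\ell=1}^{k-1}(-z_\ell)=(-1)^{k-1}\big((k+c-1)!/c!\big)^2$ and $(k;k-1)_{{\bf z},1/2}=(k+c)(k-1)!(2k+2c-1)!/(k+2c)!$; match the elementary term of $\gamma_k(0)$ with the first summand of $P_c(k)$; and observe that everything then hinges on showing that $B_k=\sum_{n\ge k+1}1/(n;k)_{{\bf z},1/2}$ equals the double sum in \eqref{5.2}. But that last statement is the entire mathematical content of the theorem --- it is precisely the paper's Lemma~\ref{lem:5.2} --- and your proposal does not prove it. You describe a strategy (expand the degree-$2c$ numerator polynomial in a factorial basis adapted to the two ends of the full product $n\cdots(n+2k+2c)$, telescope each piece via Lemma~\ref{lem:2.4}, treat the missing central factor separately) and then state the form you \emph{expect} the coefficients to take, explicitly deferring the ``delicate bookkeeping.'' Announcing the expected shape of the answer is not a derivation of it; as written, the proof has a hole exactly where the work is.

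The paper fills this hole with a concrete computation whose difficulty your sketch underestimates: it posits the partial-fraction ansatz \eqref{5.8}, and determines the coefficients $A_j$ by multiplying by $n+2k+2c-j$ and evaluating at $n=-(2k+2c)+j$, which yields the lower-triangular linear system \eqref{5.9}; solving that system requires exhibiting the explicit inverse matrix \eqref{5.11} and verifying $M\widetilde{M}=I$ via a nontrivial binomial-coefficient identity (Equation 4.2.5.50 in \cite{PrE}). Moreover, your stated mechanism for the inner sum is not the right one: the denominators $1/(k+c-\nu)$ do \emph{not} arise from ``the telescoped antidifference evaluated at $n=k+1$'' --- the telescoping boundary only supplies the outer weights $(2c-j)!/[2k(2k+2c-j)!]$ via Lemma~\ref{lem:2.4} --- rather they enter through the right-hand sides $B_\nu$ in \eqref{5.10}, i.e.\ from evaluating the reciprocal of the deleted central factor at the poles during the coefficient determination. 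This misattribution is a signal that the computation has not actually been checked, and it is exactly the step that would have to be carried out (by the paper's linear-system argument, or by a genuinely completed alternative such as your integral representation of $1/(n+c)$) before the theorem can be considered proved.
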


\noindent
{\bf Remark.} We can use \eqref{5.2} to compute
\begin{align*}
P_0(k)&=5,\\
P_1(k)&=5k^3+12k^2+4k+2,\\
P_2(k)&=5k^6+49k^5+171k^4+271k^3+232k^2+128k+48,\\
P_3(k)&=5k^9+111k^8+1011k^7+4935k^6+14262k^5+25734k^4\\
&\quad+30190k^3+24048k^2+13248k+4320,\\
P_4(k)&=5k^{12}+198k^{11}+3409k^{10}+33650k^9+211731k^8+894834k^7\\
&\quad+2613523k^6+5362734k^5 + 7817348k^4 + 8176552k^3 + 6167424k^2\\
&\quad+3244032k+967680,\\
P_5(k)&=5k^{15}+310k^{14}+8625k^{13}+142600k^{12}+1564435k^{11}+12049820k^{10}\\
&\quad+67279375k^9+277409600k^8+853390140k^7+1968104030k^6\\
&\quad+3407457500k^5+4426865800k^4+4304943120k^3+3095389440k^2\\
&\quad+1556582400k+435456000.
\end{align*}
Based on these evaluations and on further computations for several $c\geq 6$, 
we conjecture that all $P_c(k)$ are polynomials in $k$ of degree $3c$, with 
integer coefficients, including leading coefficient 5 and, for $c\geq 1$, 
constant coefficient $c!(2c)!$.

\medskip
Before proving Theorem~\ref{thm:5.1}, we note that, since $P_0(k)=5$, 
for $c=0$ the identity \eqref{5.1} reduces to \eqref{1.1}. Similarly, for 
$c=1$ and 2 and using $P_1(k)$ and $P_2(k)$ as above, we get after some minor 
manipulations,
\begin{align}
\zeta(3) &= 1+\frac{1}{4}\sum_{k=1}^\infty\frac{(-1)^{k-1}}{\binom{2k}{k}}
\cdot\frac{5k^3+12k^2+4k+2}{k(k+1)^2(2k+1)},\label{5.3}\\
\zeta(3) &= 1+\frac{1}{8}+\frac{1}{16}\sum_{k=1}^\infty
\frac{(-1)^{k-1}}{\binom{2k+2}{k+1}}
\cdot\frac{P_2(k)}{k(k+1)(k+2)^2(2k+3)}.\label{5.4}
\end{align}

The main ingredient in the proof of Theorem~\ref{thm:5.1} is the evaluation
of the infinite series on the right of \eqref{2.6}. We state this as a lemma.

\begin{lemma}\label{lem:5.2}
Let $\alpha=\frac{1}{2}$ and let $\bf z$ be the sequence given by $z_n=(n+c)^2$.
Then
\begin{align}
\sum_{n=k+1}^\infty\frac{1}{(n;k)_{{\bf z},1/2}}
&=\sum_{j=0}^{2c}\frac{(2c-j)!}{(2k+2c-j)!}\label{5.5}\\
&\quad\times\sum_{\nu=0}^j\frac{(-1)^\nu}{k+c-\nu}
\cdot\frac{(2k+j-1-\nu)(k+2c-\nu)!}{(j-\nu)!(k-1-\nu)!(2k+2c-\nu)!\nu!}.\nonumber
\end{align}
\end{lemma}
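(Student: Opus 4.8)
The plan is to reduce the sum on the left of \eqref{5.5} to a combination of the elementary telescoping sums already exploited in Lemma~\ref{lem:2.4}, and then to carry out the resulting combinatorial bookkeeping. First I would make the denominator explicit. Since $z_n=(n+c)^2$ gives $z_n^{1/2}=n+c$ and
\[
z_n-z_i=(n+c)^2-(i+c)^2=(n-i)(n+i+2c),
\]
definition \eqref{2.3} yields
\[
(n;k)_{{\bf z},1/2}=(n+c)\prod_{i=1}^k(n-i)\prod_{i=1}^k(n+2c+i).
\]
The $2k+1$ linear factors here are \emph{not} consecutive: they occupy the ranges $n-k,\dots,n-1$ and $n+2c+1,\dots,n+2c+k$, together with the single middle factor $n+c$, leaving a gap of the $2c+1$ consecutive integers $n,n+1,\dots,n+2c$ of which only $n+c$ is retained. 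I would bridge this gap by multiplying numerator and denominator by the $2c$ missing factors, turning the summand into
\[
\frac{1}{(n;k)_{{\bf z},1/2}}
=\frac{\prod_{j=0}^{2c}(n+j)}{(n+c)\prod_{j=-k}^{2c+k}(n+j)},
\]
a proper rational function whose denominator is now a product of $2c+2k+1$ \emph{consecutive} integers. A short computation (substitute $m=n+c$) shows that the numerator simplifies to the symmetric, degree-$2c$ polynomial $\prod_{j=0}^{2c}(n+j)/(n+c)=\prod_{s=1}^{c}\big((n+c)^2-s^2\big)$.

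The tool I would use is the telescoping identity behind Lemma~\ref{lem:2.4}, in the form: for a product of $M\ge2$ consecutive integers,
\[
\sum_{n=N}^\infty\frac{1}{(n+a)(n+a+1)\cdots(n+a+M-1)}
=\frac{1}{M-1}\cdot\frac{1}{(N+a)(N+a+1)\cdots(N+a+M-2)},
\]
valid whenever $N+a\ge1$. The strategy is to expand the degree-$2c$ numerator in the basis of falling factorials anchored at the top factor $n+2c+k$, so that each basis element cancels a block of the largest factors of the consecutive denominator and leaves a reciprocal of a \emph{shorter} consecutive product, to which the displayed identity applies with $N=k+1$. Each such sum then collapses to a boundary value of the shape $\tfrac{1}{(2c+2k-j)\,(2c+2k-j)!}$, and these boundary values, after recombination, account for the factorial prefactors $\frac{(2c-j)!}{(2k+2c-j)!}$ in \eqref{5.5}; the outer index $j$ records how many top factors have been peeled off.

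The inner $\nu$-sum would then arise from making the expansion coefficients explicit. Writing the numerator in the anchored basis amounts to taking finite differences of the gapped polynomial $\prod_{s=1}^c\big((n+c)^2-s^2\big)$, equivalently to partial-fractioning the retained middle factor $1/(n+c)$ against the surrounding block. Either route produces a sum over $\nu$ carrying the binomial weight $(-1)^\nu\binom{j}{\nu}$ together with the boundary evaluation $\frac{1}{k+c-\nu}$, the latter because $n+c-1-\nu$ takes the value $k+c-\nu$ at the lower limit $n=k+1$, while the standard partial-fraction identity $\sum_{\nu=0}^{j}(-1)^\nu\binom{j}{\nu}/(x-\nu)=j!/\big(x(x-1)\cdots(x-j)\big)$ governs how these pieces assemble. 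Collecting the outer telescoping index with this inner expansion index, and matching the surviving factorials, should reproduce \eqref{5.5} exactly.

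I expect the main obstacle to be precisely this final combinatorial matching rather than any analytic difficulty. Convergence is automatic, since the summand is $O(n^{-(2k+1)})$ with $k\ge1$, so every divergent piece produced by isolating $1/(n+c)$ must reorganize into telescoping differences; this guarantees a closed form with no harmonic-number remainder. The delicate points will be choosing the expansion so that the boundary terms line up as the stated factorial ratios, tracking the signs and the shift by one between $n+c$ and the evaluation $k+c-\nu$, and reindexing the resulting double sum into the precise form \eqref{5.5}.
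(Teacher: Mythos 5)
Your opening moves are sound and in fact coincide with the paper's own first step: writing $(n;k)_{{\bf z},1/2}=(n+c)\prod_{i=1}^k(n-i)(n+2c+i)$, bridging the gap so that the denominator becomes a run of $2k+2c+1$ consecutive integers, and identifying the gapped numerator $\prod_{s=1}^c\big((n+c)^2-s^2\big)$ (this is exactly the passage to $Q_n$ in \eqref{5.7}). The genuine gap lies in what you do next. You expand the numerator in the graded falling-factorial basis anchored at the top factor, so that your $j$-th piece telescopes to $\frac{1}{(2k+2c-j)\,(2k+2c-j)!}$. The paper instead posits a partial-fraction decomposition into \emph{sliding windows of constant length} $2k+1$, namely $Q_n=\sum_{j=0}^{2c}A_j\big/\big[(n+2k+2c-j)\cdots(n+2c-j)\big]$, determines the $A_j$ by solving a lower-triangular binomial system (whose explicit inversion itself requires a binomial identity), and only then applies Lemma~\ref{lem:2.4} with $r=2c-j$; it is precisely this ansatz that produces the prefactors $\frac{(2c-j)!}{2k(2k+2c-j)!}$ and the inner $\nu$-sum of \eqref{5.5}. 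These two decompositions are genuinely different, and your hope that your boundary values ``after recombination account for'' the prefactors in \eqref{5.5} fails term by term. Concretely, for $c=k=1$ the left side of \eqref{5.5} is $\sum_{n\geq 2}\tfrac{1}{(n-1)(n+1)(n+3)}=\tfrac{11}{96}$; your expansion $n(n+2)=(n+3)(n+2)-3(n+3)+3$ yields the three contributions $\tfrac{24}{96},\,-\tfrac{16}{96},\,\tfrac{3}{96}$ (note the sign change), whereas the three $j$-terms of \eqref{5.5} are $\tfrac{1}{96},\,\tfrac{4}{96},\,\tfrac{6}{96}$: only the totals agree. So the ``final combinatorial matching'' you defer is not bookkeeping; it is an identity between two structurally different double sums, and proving it is essentially the entire content of the lemma. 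As it stands, your plan proves that the sum \emph{has} a closed form of a certain shape, not the closed form \eqref{5.5}.

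A smaller but symptomatic inaccuracy: in your route the factor $\frac{1}{k+c-\nu}$ does not arise from any evaluation at the lower summation limit $n=k+1$. If you compute your expansion coefficients by finite differences, they are $\frac{1}{j!}\sum_{\nu=0}^j(-1)^{j-\nu}\binom{j}{\nu}P(-2c-k+\nu)$ with $P(n)=\prod_{s=1}^c\big((n+c)^2-s^2\big)$, and one finds
\begin{equation*}
P(-2c-k+\nu)=\frac{(k+2c-\nu)!}{(k+c-\nu)\,(k-1-\nu)!},
\end{equation*}
so the $\frac{1}{k+c-\nu}$ is the missing middle factor evaluated at the interpolation node, exactly the mechanism behind the paper's $B_\nu$ in \eqref{5.10}. (Any complete argument must also treat the degenerate node $\nu=k+c$, which occurs when $k\leq c$: there the factorial-ratio form is of the type $0\cdot\infty$ and must be read as omitting the vanishing factor; working with the polynomial values $P(-2c-k+\nu)$ directly, as your route naturally does, avoids this, but matching against the factorial form in \eqref{5.5} then still has to be addressed.) In short: right warm-up, but the decomposition you chose does not lead to \eqref{5.5}, and the step you labelled as routine is where the proof actually lives.
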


\begin{proof}
We fix the integers $k$ and $c$. By the definition \eqref{2.3} we have
\[
(n;k)_{{\bf z},1/2} = (n+c)\prod_{i=1}^k\left((n+c)^2-(i+c)^2\right),
\]
and writing the factors in descending order, we get
\begin{equation}\label{5.5a}
(n;k)_{{\bf z},1/2} = (n+k+2c)\cdots(n+2c+1)(n+c)(n-1)\cdots(n-k),
\end{equation}
so that
\begin{equation}\label{5.6}
\sum_{n=k+1}^\infty\frac{1}{(n;k)_{{\bf z},1/2}} = \sum_{n=1}^\infty Q_n,
\end{equation}
where
\begin{equation}\label{5.7}
Q_n :=\frac{1}{n+k+c}\cdot
\frac{(n+k+2c)(n+k+2c-1)\cdots(n+k)}{(n+2k+2c)(n+2k+2c-1)\cdots(n+1)n}.
\end{equation}
It is our goal to find a partial fraction expansion of \eqref{5.7} to make it
possible to apply Lemma~\ref{lem:2.4}. For this purpose we consider $n$, for 
the moment, as a variable and set
\begin{equation}\label{5.8}
Q_n = \sum_{j=0}^{2c}\frac{A_j}{(n+2k+2c-j)\cdots(n+2c-j)},
\end{equation}
where the coefficients $A_0,\ldots,A_{2c}$ are to be determined. To do so, we
fix $j$, $0\leq j\leq 2c$, multiply the right-hand sides of \eqref{5.7} and
\eqref{5.8} by $n+2k+2c-j$, and formally set $n=-2k-2c+j$. Then it is 
straightforward to see that the right-hand side of \eqref{5.7} becomes
\[
\frac{(-1)^j}{k+c-j}\cdot\frac{(k+2c-j)!}{(2k+2c-j)!(k-1-j)!j!},
\]
while the right-hand side of \eqref{5.8} will give
\[
\frac{(-1)^jA_0}{j!(2k-j)!}+\frac{(-1)^{j-1}A_1}{(j-1)!(2k-j+1)!}
+\cdots+\frac{A_j}{(2k)!}.
\]
Equating the two gives, for $j=0,1,\ldots, 2c,$
\begin{equation}\label{5.9}
\binom{2k}{j}A_0-\binom{2k}{j-1}A_1+\cdots+(-1)^j\binom{2k}{0}A_j = B_j,
\end{equation}
where
\begin{equation}\label{5.10}
B_j := \frac{1}{k+c-j}\cdot\frac{(k+2c-j)!(2k)!}{(k-1-j)!(2k+2c-j)!j!}.
\end{equation}
The linear system \eqref{5.9} has a $(2c+1)\times(2c+1)$ lower triangular 
matrix $M$ with diagonal $(1,-1,1,-1,\ldots,1)$, and thus determinant $(-1)^c$.
We claim that its inverse is the matrix $\widetilde{M}$, given by the column
vectors
\begin{equation}\label{5.11}
\left(0,\ldots,0,(-1)^\ell\binom{2k-1}{0},(-1)^\ell\binom{2k}{1},\ldots,
(-1)^\ell\binom{2k-1+2c-\ell}{2c-\ell}\right),
\end{equation}
($\ell=0,1,\ldots,2c$) with $\ell$ initial zeros. Thus, the matrix 
$\widetilde{M}$ is also lower triangular. To show that this is indeed the 
inverse of $M$, we take the inner product of the $j$th row of $M$, as given by
the coefficient sequence in \eqref{5.9}, and the $\ell$th column of 
$\widetilde{M}$ in \eqref{5.11}. When $j<\ell$, this is obviously zero. When
$j\geq\ell$, this inner product is
\begin{align*}
(-1)^\ell\sum_{i=\ell}^j&(-1)^i\binom{2k}{j-i}\binom{2k-1+i-\ell}{i-\ell}\\
&=(-1)^{\ell-j}\sum_{s=0}^{j-\ell}(-1)^s\binom{2k}{s}\binom{2k-1+j-\ell-s}{2k-1},
\end{align*}
where we obtained the right-hand side by setting $s=j-i$ and then slightly
manipulating the second binomial coefficient. Now, when $\ell=j$, the 
right-hand side is obviously 1. When $\ell<j$, a known binomial coefficient
identity, namely Equation 4.2.5.50 in \cite[p.~619]{PrE}, shows that the 
right-hand side vanishes. Hence we have shown that $M\widetilde{M}=I$, and thus
$\widetilde{M}=M^{-1}$, as claimed.

Now, solving the system \eqref{5.9} with the help of the matrix $\widetilde{M}$,
we get
\[
A_j = \sum_{\nu=0}^j(-1)^\nu\binom{2k+j-1-\nu}{j-\nu}B_\nu,
\]
and thus, with \eqref{5.10},
\begin{equation}\label{5.12}
A_j = \sum_{\nu=0}^j\frac{(-1)^\nu\cdot 2k}{k+c-j}
\cdot\frac{(2k+j-1-\nu)!(k+2c-\nu)!}{(j-\nu)!(k-1-\nu)!(2k+2c-\nu)!\nu!}.
\end{equation}
Next, with \eqref{5.6}, \eqref{5.8} and Lemma~\ref{lem:2.4} we have
\begin{align*}
\sum_{n=k+1}^\infty\frac{1}{(n;k)_{{\bf z},1/2}} 
&= \sum_{j=0}^{2c}A_j\sum_{n=1}^\infty\frac{1}{(n+2k+2c-j)\cdots(n+2c-j)}\\
&= \sum_{j=0}^{2c}A_j\cdot\frac{(2c-j)!}{2k(2k+2c-j)!}.
\end{align*}
Finally, this combined with \eqref{5.12} gives the desired evaluation 
\eqref{5.5}.
\end{proof}

\begin{proof}[Proof of Theorem~\ref{thm:5.1}]
We use Theorem~\ref{thm:2.3} with $\beta=2$ and $d=0$, so that $z_k=(k+c)^2$.
Furthermore, we set $x=0$ and $\alpha=\frac{1}{2}$, and we let $c\geq 0$ be 
an integer. From \eqref{5.5a} we have
\begin{align}
(k;k-1)_{{\bf z},1/2} &= (2k-1+2c)\cdots(k+1++2c)(k+c)(k-1)\cdots 2\cdot 1\label{5.13}\\
&= (k+c)\frac{(2k-1+2c)!(k-1)!}{(k+2c)!}.\nonumber
\end{align}
Based on the observation that the right-hand side of \eqref{5.5}, when 
multiplied by $2(k+c)!(2k+2c)!/(k-1)!$, seems to be a polynomial in $k$ with
integer coefficients, we denote
\begin{equation}\label{5.14}
\widetilde{P}_c(k):=\frac{2(k+c)!(2k+2c)!}{(k-1)!}
\sum_{n=k+1}^\infty\frac{1}{(n;k)_{{\bf z},1/2}}.
\end{equation}
Hence with \eqref{2.7} and \eqref{5.13}, \eqref{5.14} we get
\[
\gamma_k(0)=\frac{1}{(k+c)^2}\cdot\frac{(k+2c)!}{(k+c)(2k-1+2c)!(k-1)!}
+\frac{(k-1)!\widetilde{P}_c(k)}{2(k+c)!(2k+2c)!}.
\]
Rewriting this, we find
\begin{equation}\label{5.15}
\gamma_k(0)=\frac{(k-1)!}{4(k+c)^2(k+c-1)!(2k+2c-1)!}P_c(k),
\end{equation}
where 
\begin{equation}\label{5.16}
P_c(k) = \frac{4(k+2c)!(k+c-1)!}{(k+c)(k-1)!^2} + \widetilde{P}_c(k). 
\end{equation}
This last identity, with \eqref{5.14} and \eqref{5.5}, can be seen to be the
same as \eqref{5.2}.

Next we note that
\[
\prod_{\ell=1}^{k-1}\left(-z_\ell^2\right) 
= (-1)^{k-1}\prod_{\ell=1}^{k-1}(\ell+c)^2 = (-1)^{k-1}\frac{(k+c-1)!^2}{c!^2},
\]
and so, with \eqref{5.15} and \eqref{2.15} we get 
\begin{align}
\zeta_{\bf z}(\tfrac{3}{2})
&=\sum_{k=1}^\infty(-1)^{k-1}\frac{(k-1)!(k+c-1)!}{4(k+c)^2c!^2(2k+2c-1)!}P_c(k)\label{5.17}\\
&=\frac{1}{2c!^2}\sum_{k=1}^\infty(-1)^{k-1}\frac{(k-1)!(k+c)!}{(k+c)^2(2k+2c)!}P_c(k).\nonumber
\end{align}
Finally, by \eqref{2.4} and \eqref{5.0} we have 
$\zeta_{\bf z}(\tfrac{3}{2})=\zeta_c(3)$, and upon rewriting the right-most 
term in \eqref{5.17}, we get the desired identity \eqref{5.1}.
\end{proof}

\section{Identities for even powers of $\pi$}

In this section we use Theorem~\ref{thm:2.3} again, but this time with
$z_n=(n+\frac{1}{2})^2$ and $\alpha=0$. While in Section~5 we considered only
the case $x=0$, we will now make full use of the identity \eqref{2.15}. Before
we can state the main result of this section, we need the following definition.
For integers $K\geq 1$ and $\nu\geq 1$ we denote
\begin{equation}\label{6.1}
\widetilde{H}_K(\nu)
:=\sum_{K\geq k_1>\cdots>k_{\nu}\geq 1}\frac{1}{(2k_1+1)^2\cdots(2k_{\nu}+1)^2},
\end{equation}
and we set $\widetilde{H}_K(0)=1$ for all integers $K\geq 1$. These numbers can
be seen as a type of generalized harmonic numbers.

\begin{theorem}\label{thm:6.1}
For any integer $\mu\geq 0$ we have
\begin{align}
&\big(1-4^{-\mu-1}\big)\zeta(2\mu+2) 
=1+\sum_{k=1}^\infty\frac{(-1)^{k+\mu-1}}{16^k(2k+1)^2}\binom{2k}{k}\label{6.2}\\
&\times\bigg(\frac{10k^3+9k^2-k+1}{2k-1}\widetilde{H}_{k-1}(\mu)
+4k(k+1)\sum_{j=1}^{\mu}\frac{(-1)^j}{(2k+1)^{2j}}\widetilde{H}_{k-1}(\mu-j)\bigg).\nonumber
\end{align}
\end{theorem}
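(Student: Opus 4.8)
The plan is to apply Theorem~\ref{thm:2.3} with $\beta=2$, $c=\tfrac12$, $d=0$, so that $z_n=(n+\tfrac12)^2$, together with $\alpha=0$; then the coefficient of $x^\mu$ on the left of \eqref{2.15} is $\zeta_{\bf z}(\mu+1)$, and the region of validity is $|x|<\min\{1,z_1\}=1$. First I would record, using $\sum_{n\geq0}(2n+1)^{-s}=(1-2^{-s})\zeta(s)$, that
\[
\zeta_{\bf z}(m+1)=4^{m+1}\sum_{n=1}^\infty\frac{1}{(2n+1)^{2(m+1)}}
=(4^{m+1}-1)\zeta(2m+2)-4^{m+1}.
\]
Since $4^{m+1}-1=4^{m+1}(1-4^{-m-1})$, once the coefficient of $x^\mu$ in \eqref{2.15} is known, equating coefficients and solving for $\zeta(2\mu+2)$ will produce the factor $(1-4^{-\mu-1})$ and the leading term $1$ in \eqref{6.2} automatically; concretely, dividing the coefficient identity by $4^{\mu+1}$ turns $-4^{\mu+1}$ into the additive $1$ on the right. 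So the real task is to compute the coefficient of $x^\mu$ on the right of \eqref{2.15} and rescale by $4^{-\mu-1}$.

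Next I would assemble the two ingredients of $\gamma_k(x)$ in \eqref{2.6}. From \eqref{2.3}, since $(n+\tfrac12)^2-(i+\tfrac12)^2=(n+i+1)(n-i)$, we get
\[
(n;k)_{{\bf z},0}=(n+2)(n+3)\cdots(n+k+1)\,(n-1)(n-2)\cdots(n-k),
\]
and in particular $(k;k-1)_{{\bf z},0}=(2k)!(k-1)!/(k+1)!$. The harder ingredient is the series $\sum_{n=k+1}^\infty 1/(n;k)_{{\bf z},0}$: writing $1/(n;k)_{{\bf z},0}=n(n+1)/\big((n-k)(n-k+1)\cdots(n+k+1)\big)$, a ratio with $2k+2$ consecutive integers in the denominator, I would resolve it into partial fractions (in the spirit of Lemma~\ref{lem:2.4} and Lemma~\ref{lem:5.2}) and sum each resulting telescoping piece via \eqref{2.17}. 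I expect this evaluation, together with its combination with the first term of $\gamma_k(x)$, to be the main obstacle, and to be exactly what produces the cubic factor $(10k^3+9k^2-k+1)/(2k-1)$.

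With the series in hand, I would expand the first term of $\gamma_k(x)$ geometrically, $\frac{1}{z_k-x}=\frac{4}{(2k+1)^2}\sum_{j\geq0}\frac{(4x)^j}{(2k+1)^{2j}}$, and expand the product using the numbers \eqref{6.1} as elementary symmetric functions:
\[
\prod_{\ell=1}^{k-1}(x-z_\ell)=(-1)^{k-1}\Big(\prod_{\ell=1}^{k-1}(\ell+\tfrac12)^2\Big)
\sum_{\nu=0}^{k-1}(-4)^\nu\widetilde{H}_{k-1}(\nu)x^\nu,
\]
where $\prod_{\ell=1}^{k-1}(\ell+\tfrac12)^2=16\,(2k-1)!^2/\big(16^k(k-1)!^2\big)$. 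Multiplying $\gamma_k(x)$ by this product and collecting the coefficient of $x^\mu$, the constant-in-$x$ series term contributes a multiple of $\widetilde{H}_{k-1}(\mu)$, the geometric terms with $j\geq1$ contribute the inner sum $\sum_{j=1}^\mu(-1)^j(2k+1)^{-2j}\widetilde{H}_{k-1}(\mu-j)$ with coefficient $4k(k+1)$, and the $j=0$ geometric term merges with the series term to form the full coefficient of $\widetilde{H}_{k-1}(\mu)$. Throughout, the factorials collapse via $(2k-1)!/(k-1)!^2=\tfrac{k}{2}\binom{2k}{k}$ into the stated prefactor $\binom{2k}{k}/\big(16^k(2k+1)^2\big)$, while the signs $(-1)^{k-1}$ and $(-4)^\nu$ combine, after the division by $4^{\mu+1}$, into $(-1)^{k+\mu-1}$.

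Finally, substituting the recorded value of $\zeta_{\bf z}(\mu+1)$ into the resulting coefficient identity and solving for $\zeta(2\mu+2)$ yields \eqref{6.2}. The only genuinely delicate points are the evaluation of the infinite series (and its fusion with the $j=0$ geometric term into the cubic) and the careful bookkeeping of the powers of $4$ and the signs; the remaining manipulations are routine.
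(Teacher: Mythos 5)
Your proposal is correct, and its skeleton coincides with the paper's proof: apply Theorem~\ref{thm:2.3} with $z_n=(n+\tfrac12)^2$, $\alpha=0$, expand $\gamma_k(x)$ geometrically and $\prod_{\ell<k}(x-z_\ell)$ via the numbers \eqref{6.1}, equate coefficients of $x^\mu$, convert $\zeta_{\bf z}(\mu+1)$ to $(4^{\mu+1}-1)\zeta(2\mu+2)-4^{\mu+1}$, and divide by $4^{\mu+1}$; all your intermediate formulas (the value of $(k;k-1)_{{\bf z},0}$, the product $\prod_{\ell<k}(\ell+\tfrac12)^2$, the collapse of factorials into $\binom{2k}{k}/\big(16^k(2k+1)^2\big)$, and the sign bookkeeping producing $(-1)^{k+\mu-1}$) agree with the paper's \eqref{6.12}--\eqref{6.19}. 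The one genuine difference is the key series evaluation, which the paper isolates as Lemma~\ref{lem:6.3} and proves by shifting the summation and evaluating three ${}_2F_1(\,\cdot\,;1)$ values with Gauss's theorem \eqref{6.8}. You instead propose partial fractions plus telescoping, and leave it as an expectation rather than carrying it out; it does work, and here is the missing computation: writing $n(n+1)=(n-k)(n+k+1)+k(k+1)$ gives
\[
\frac{n(n+1)}{(n-k)(n-k+1)\cdots(n+k+1)}
=\frac{1}{(n-k+1)\cdots(n+k)}+\frac{k(k+1)}{(n-k)\cdots(n+k+1)},
\]
and each piece telescopes over $n\geq k+1$ (note that \eqref{2.17} as stated concerns an odd number, $2k+1$, of consecutive factors, whereas your pieces have $2k$ and $2k+2$ factors, so you need the general form of the telescoping identity, proved exactly as in Lemma~\ref{lem:2.4}), yielding
\[
\frac{1}{(2k-1)(2k)!}+\frac{k(k+1)}{(2k+1)(2k+1)!}
=\frac{2k^3+5k^2+3k+1}{(2k-1)(2k+1)(2k+1)!},
\]
which is exactly \eqref{6.5}; fused with the $j=0$ geometric term $\tfrac{4k(k+1)}{(2k+1)(2k+1)!}$ this produces the cubic $10k^3+9k^2-k+1$, confirming your expectation. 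Your route is more elementary and closer in spirit to the rest of the paper (Lemmas~\ref{lem:2.4} and~\ref{lem:5.2}), avoiding hypergeometric machinery altogether; the paper's route buys a uniform method---reduce to ${}_2F_1$ values and quote Gauss---that needs no ad hoc splitting of the numerator and generalizes mechanically to similar sums.
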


Before proving this result, we state the two smallest cases separately. To do 
so, we use the fact that $\zeta(2)=\pi^2/6$ and $\zeta(4)=\pi^4/90$.

\begin{corollary}\label{cor:6.2}
\begin{align}
\frac{\pi^2}{8} &= 1+\sum_{k=1}^\infty\frac{(-1)^{k-1}}{16^k}\binom{2k}{k}
\frac{10k^3+9k^2-k+1}{(2k-1)(2k+1)^2},\label{6.3}\\
\frac{\pi^4}{96} &= 1+\sum_{k=1}^\infty\frac{(-1)^{k-1}}{16^k}\binom{2k}{k}
\frac{1}{(2k+1)^2}\label{6.4}\\
&\qquad\times\bigg(\frac{4k(k+1)}{(2k+1)^2}-\frac{10k^3+9k^2-k+1}{2k-1}
\sum_{j=1}^{k-1}\frac{1}{(2j+1)^2}\bigg).\nonumber
\end{align}
\end{corollary}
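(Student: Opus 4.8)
The plan is to obtain both identities as direct specializations of Theorem~\ref{thm:6.1}, taking $\mu=0$ for \eqref{6.3} and $\mu=1$ for \eqref{6.4}, and then inserting the classical evaluations $\zeta(2)=\pi^2/6$ and $\zeta(4)=\pi^4/90$. No new analytic input is required: the entire argument consists of simplifying the right-hand side of \eqref{6.2} in these two cases and checking the numerical prefactors on the left.

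First I would treat $\mu=0$. Here $\widetilde{H}_{k-1}(0)=1$ by the convention following \eqref{6.1}, and the inner sum $\sum_{j=1}^{\mu}$ in \eqref{6.2} is empty. The right-hand side therefore collapses to
\[
1+\sum_{k=1}^\infty\frac{(-1)^{k-1}}{16^k(2k+1)^2}\binom{2k}{k}\frac{10k^3+9k^2-k+1}{2k-1},
\]
which is exactly the right-hand side of \eqref{6.3}. On the left, $\big(1-4^{-1}\big)\zeta(2)=\tfrac{3}{4}\cdot\tfrac{\pi^2}{6}=\tfrac{\pi^2}{8}$, giving \eqref{6.3}.

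For $\mu=1$ the two pieces of the summand in \eqref{6.2} both simplify cleanly. By the definition \eqref{6.1} we have $\widetilde{H}_{k-1}(1)=\sum_{j=1}^{k-1}(2j+1)^{-2}$, while the $j$-sum in \eqref{6.2} reduces to its single term $j=1$, which equals $-(2k+1)^{-2}\widetilde{H}_{k-1}(0)=-(2k+1)^{-2}$. The global sign is now $(-1)^{k+1-1}=(-1)^k=-(-1)^{k-1}$; pulling $(-1)^{k-1}$ to the front leaves an overall minus that, when distributed across the parenthesis, exchanges the two terms and produces exactly the factor $\tfrac{4k(k+1)}{(2k+1)^2}-\tfrac{10k^3+9k^2-k+1}{2k-1}\sum_{j=1}^{k-1}(2j+1)^{-2}$ appearing in \eqref{6.4}. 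The left side is $\big(1-4^{-2}\big)\zeta(4)=\tfrac{15}{16}\cdot\tfrac{\pi^4}{90}=\tfrac{\pi^4}{96}$, completing \eqref{6.4}.

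The only place demanding any care is this sign bookkeeping in the case $\mu=1$: one must confirm that the global factor $(-1)^{k+\mu-1}$, the internal factor $(-1)^j$ in the $j$-sum, and the sign change incurred when the two terms are reordered all combine to give the displayed arrangement, with $(-1)^{k-1}$ in front and the $+/-$ pattern of \eqref{6.4}. I expect this to be the main (indeed the only) obstacle, and it is an entirely routine verification.
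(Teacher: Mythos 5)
Your proposal is correct and is precisely the paper's own route: the corollary is obtained by specializing Theorem~\ref{thm:6.1} to $\mu=0$ and $\mu=1$ and inserting $\zeta(2)=\pi^2/6$ and $\zeta(4)=\pi^4/90$, with the prefactors $(1-4^{-1})\zeta(2)=\pi^2/8$ and $(1-4^{-2})\zeta(4)=\pi^4/96$ exactly as you computed. Your sign bookkeeping for $\mu=1$ (absorbing $(-1)^{k+\mu-1}=(-1)^k$ into $(-1)^{k-1}$ and swapping the two terms in the parenthesis) matches how \eqref{6.2} yields \eqref{6.4}.
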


More generally, using Euler's formula
\[
\zeta(2n) = \frac{(-1)^{n-1}2^{2n-1}B_{2n}}{(2n)!}\cdot\pi^{2n},
\]
where $B_{2n}$ is the $2n$th Bernoulli number, we can write the left-hand side
of \eqref{6.2} as a rational multiple of $\pi^{2\mu+2}$.

For the proof of Theorem~\ref{thm:6.1} we require the following series
evaluation.

\begin{lemma}\label{lem:6.3}
For any integer $k\geq 1$ we have
\begin{equation}\label{6.5}
\sum_{n=k+1}^\infty\frac{n(n+1)(n-k+1)!}{(n+k+1)!}
=\frac{2k^3+5k^2+3k+1}{(2k-1)(2k+1)(2k+1)!}.
\end{equation}
\end{lemma}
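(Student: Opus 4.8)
The plan is to prove \eqref{6.5} by producing an explicit telescoping antidifference for the summand, exactly in the spirit of the partial-fraction/telescoping computations already used for Lemma~\ref{lem:2.4} and Lemma~\ref{lem:5.2}. First I would put the summand in product form: its denominator is a product of $2k+2$ consecutive integers running from $n-k$ up to $n+k+1$, so the $n$th term is
\[
Q_n=\frac{n(n+1)}{(n+k+1)(n+k)\cdots(n-k)}.
\]
The goal is then to find a function $F$ with $Q_n=F(n)-F(n+1)$, so that the series collapses to $F(k+1)$.

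To find $F$, I would make the ansatz
\[
F(n)=\frac{R(n)}{(n+k)(n+k-1)\cdots(n-k)},
\]
whose denominator is the \emph{centered} product of $2k+1$ consecutive integers from $n-k$ to $n+k$. Forming $F(n)-F(n+1)$ over the common denominator $(n+k+1)(n+k)\cdots(n-k)$ restores precisely the denominator of $Q_n$ and leaves the numerator $R(n)(n+k+1)-R(n+1)(n-k)$. Since this numerator must equal the quadratic $n(n+1)$, one takes $R$ quadratic, $R(n)=an^2+bn+c$. Expanding and equating the coefficients of $n^2$, $n^1$, $n^0$ produces a small triangular linear system, which I expect to force
\[
a=\frac{1}{2k-1},\qquad b=0,\qquad c=-\frac{k}{(2k-1)(2k+1)},
\]
so that $R(n)=\dfrac{(2k+1)n^2-k}{(2k-1)(2k+1)}$. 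Note how the factors $2k-1$ and $2k+1$ appearing in the stated answer arise naturally as the denominators generated by this system.

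With $F$ in hand the verification is routine: one checks the polynomial identity $R(n)(n+k+1)-R(n+1)(n-k)=n(n+1)$ directly, and observes that $F(n)=O\!\left(n^{\,1-2k}\right)\to 0$ as $n\to\infty$ for $k\ge1$, which justifies the telescoping. Hence $\sum_{n=k+1}^\infty Q_n=F(k+1)$, and it remains only to evaluate $F(k+1)$: the denominator product runs from $1$ to $2k+1$, giving $(2k+1)!$, while $R(k+1)=\dfrac{(2k+1)(k+1)^2-k}{(2k-1)(2k+1)}=\dfrac{2k^3+5k^2+3k+1}{(2k-1)(2k+1)}$, which reproduces the right-hand side of \eqref{6.5} exactly.

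The only genuinely nontrivial step is locating the antidifference, i.e.\ guessing the correct shape of $F$ (a quadratic numerator over the centered $(2k+1)$-fold product); this is essentially an instance of Gosper's algorithm, and once the shape is fixed everything else is bookkeeping. The degree of the numerator of $Q_n$ forces $\deg R=2$, and the requirement that $F(n)-F(n+1)$ land on the $(2k+2)$-fold denominator forces the centered $(2k+1)$-fold block, so there is essentially no freedom left; I therefore anticipate no real obstacle beyond the elementary coefficient computation.
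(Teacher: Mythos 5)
Your proof is correct, and it takes a genuinely different route from the paper's. One preliminary remark: the lemma as printed has a typo --- the summand should be $n(n+1)(n-k-1)!/(n+k+1)!$ (with $(n-k-1)!$, as required by \eqref{6.12} and \eqref{6.13}; with $(n-k+1)!$ the terms do not even tend to zero). Your product form $Q_n=n(n+1)/\big((n+k+1)(n+k)\cdots(n-k)\big)$ is exactly the corrected reading, so your argument addresses the intended statement. As for the comparison: the paper shifts the index by $k+1$, expands $(n+k+1)(n+k+2)$ to split the sum into three series $S_0,S_1,S_2$ with summands $n^j\,n!/(n+2k+2)!$, and evaluates each one by Gauss's summation theorem ${}_2F_1(a,b;c;1)=\Gamma(c)\Gamma(c-a-b)/\big(\Gamma(c-a)\Gamma(c-b)\big)$, i.e.\ equation \eqref{6.8}. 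You instead produce an explicit rational antidifference $F(n)=R(n)/\big((n+k)\cdots(n-k)\big)$ with $R(n)=\big((2k+1)n^2-k\big)/\big((2k-1)(2k+1)\big)$ and telescope; I verified the linear system (the coefficients $a=1/(2k-1)$, $b=0$, $c=-k/\big((2k-1)(2k+1)\big)$ are right, since the polynomial identity $R(n)(n+k+1)-R(n+1)(n-k)=n(n+1)$ holds with these values), the decay $F(n)=O(n^{1-2k})\to 0$, and the evaluation $F(k+1)=\big(2k^3+5k^2+3k+1\big)/\big((2k-1)(2k+1)(2k+1)!\big)$, so the proof is complete. Your approach is more elementary and self-contained --- no hypergeometric input, and it is in the same spirit as the telescoping proofs of Lemma~\ref{lem:2.4} and the partial-fraction computation in Lemma~\ref{lem:5.2} --- at the cost of having to guess (or derive via Gosper's algorithm) the shape of $F$. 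The paper's route avoids any ansatz and is more uniform: once Gauss's theorem is admitted, the same mechanism evaluates every sum of the form $\sum_n p(n)\,n!/(n+2k+2)!$ with $p$ a polynomial, which is convenient if one wants variants of this lemma.
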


\begin{proof}
We denote the series in \eqref{6.5} by $S$ and shift the summation by $k+1$
units, obtaining
\begin{align}
S &= \sum_{n=0}^\infty(n+k+1)(n+k+2)\frac{n!}{(n+2k+2)!}\label{6.6}\\
&= \sum_{n=0}^\infty\frac{n^2\cdot n!}{(n+2k+2)!}
+(2k+3)\sum_{n=0}^\infty\frac{n\cdot n!}{(n+2k+2)!} \nonumber\\
&\qquad\qquad+(k+1)(k+2)\sum_{n=0}^\infty\frac{n!}{(n+2k+2)!}\nonumber\\
&= S_2 + (2k+3)S_1 + (k+1)(k+2)S_0.\nonumber
\end{align}
To evaluate the series $S_0, S_1, S_2$, we use the Gaussian hypergeometric
series,
\begin{equation}\label{6.7}
_2F_1(a,b;c;z) = \sum_{n=0}^\infty\frac{(a)_n(b)_n}{(c)_n}\cdot\frac{z^n}{n!},
\end{equation}
where $(x)_n=x(x+1)\cdots(x+n-1)$ for $n\geq 1$ and $(x)_0=1$, and apply the
well-known identity
\begin{equation}\label{6.8}
_2F_1(a,b;c;1) = \frac{\Gamma(c)\Gamma(c-a-b)}{\Gamma(c-a)\Gamma(c-b)}\qquad
({\rm Re}(c-a-b)>0),
\end{equation}
known as Gauss's theorem;
see, e.g., \cite[Eq.~15.4.20]{DLMF}. Applying \eqref{6.7} and then \eqref{6.8},
we first see that
\begin{equation}\label{6.9}
S_0=\sum_{n=0}^\infty\frac{n!}{(n+2k+2)!}
=\frac{1}{(2k+2)!}\,_2F_1(1,1;2k+3;1) =\frac{1}{(2k+1)(2k+1)!}.
\end{equation}
Next we note that 
\[
\sum_{n=0}^\infty\frac{n\cdot n!}{(n+2k+2)!}
= \sum_{n=0}^\infty\frac{(n+1)!}{(n+2k+2)!} - S_0,
\]
and with \eqref{6.7} and \eqref{6.8} we get
\[
\sum_{n=0}^\infty\frac{(n+1)!}{(n+2k+2)!}
=\frac{1}{(2k+2)!}\,_2F_1(1,2;2k+3;1) =\frac{1}{2k(2k+1)!},
\]
so that
\begin{equation}\label{6.10}
S_1 = \frac{1}{2k(2k+1)!} - \frac{1}{(2k+1)(2k+1)!} = \frac{1}{2k(2k+1)(2k+1)!}.
\end{equation}
Next we have
\begin{align}
S_2 &= \sum_{n=0}^\infty\frac{n^2\cdot n!}{(n+2k+2)!}
= \sum_{n=0}^\infty\frac{\big(1-3(n+1)+(n+1)(n+2)\big)\cdot n!}{(n+2k+2)!}\label{6.11}\\
&= S_0-3(S_0+S_1)+\sum_{n=0}^\infty\frac{(n+2)!}{(n+2k+2)!}.\nonumber
\end{align}
Using \eqref{6.7} and \eqref{6.8} again, we get
\[
\sum_{n=0}^\infty\frac{(n+2)!}{(n+2k+2)!}
=\frac{2}{(2k+2)!}\,_2F_1(1,3;2k+3;1) =\frac{2}{(2k-1)(2k+1)!}.
\]
Finally, combining this with \eqref{6.11} and then with \eqref{6.6}, \eqref{6.9}
and \eqref{6.10}, we see after some routine manipulations that $S$ equals the
right-hand side of \eqref{6.5}.
\end{proof}

\begin{proof}[Proof of Theorem~\ref{thm:6.1}]
We begin with using the definition \eqref{2.3}, obtaining
\begin{align}
(n;k)_{{\bf z},0} 
&= \prod_{\ell=1}^k\left((n+\tfrac{1}{2})^2-(\ell+\tfrac{1}{2})^2\right)
=\frac{1}{4^k}\prod_{\ell=1}^k\left((2n+1)^2-(2\ell+1)^2\right)\label{6.12}\\
&= \frac{1}{4^k}(2(n+k)+2)(2(n+k))(2(n+k)-2)\cdots(2n+4)\nonumber\\
&\qquad\qquad\times(2n-2)(2n-4)\cdots(2n-2k)\nonumber\\
&= \frac{1}{4^k}\cdot 2^{2k}\cdot\frac{(n+k+1)!}{(n+1)!}\cdot\frac{(n-1)!}{(n-k-1)!}
= \frac{(n+k+1)!}{n(n+1)(n-k-1)!},\nonumber
\end{align}
and by Lemma~\ref{lem:6.3} we have
\begin{equation}\label{6.13}
\sum_{n=k+1}^\infty\frac{1}{(n;k)_{{\bf z},0}}
=\frac{2k^3+5k^2+3k+1}{(2k-1)(2k+1)(2k+1)!}.
\end{equation}
By \eqref{6.12} we also have
\begin{equation}\label{6.14}
(k;k-1)_{{\bf z},0} = \frac{(k+k)!}{k(k+1)\cdot 0!} = \frac{2(2k-1)!}{k+1}.
\end{equation}
These identities, combined with \eqref{2.7}, give
\[
\gamma_k(x)=\frac{k+1}{2(2k-1)!}\cdot\frac{1}{(k+\tfrac{1}{2})^2-x}
+\frac{2k^3+5k^2+3k+1}{(2k-1)(2k+1)(2k+1)!},
\]
and since
\[
\frac{1}{(k+\tfrac{1}{2})^2-x} 
= \frac{4}{(2k+1)^2}\cdot\frac{1}{1-\frac{4x}{(2k+1)^2}},
\]
we get
\begin{equation}\label{6.15}
\gamma_k(x)=\frac{10k^3+9k^2-k+1}{(2k-1)(2k+1)(2k+1)!}
+\frac{4k(k+1)}{(2k+1)(2k+1)!}
\sum_{j=1}^\infty\left(\frac{2}{2k+1}\right)^{2j}x^j.
\end{equation}
Next we have
\begin{align*}
\prod_{\ell=1}^{k-1}\left(x-z_l\right)
&=\frac{1}{4^{k-1}}\prod_{\ell=1}^{k-1}\left(4x-(2\ell+1)^2\right) \\
&=(-1)^{k-1}\frac{(2k)!^2}{4^{2k-1}k!^2}
\prod_{\ell=1}^{k-1}\left(1-\frac{4x}{(2\ell+1)^2}\right),
\end{align*}
and thus, by \eqref{6.1}, we get
\begin{equation}\label{6.16}
\prod_{\ell=1}^{k-1}\left(x-z_l\right) = (-1)^{k-1}\frac{(2k)!^2}{4^{2k-1}k!^2}
\sum_{\nu=0}^{k-1}(-4)^\nu\widetilde{H}_{k-1}(\nu)x^{\nu}.
\end{equation}
Now we proceed in analogy to the proof of Theorem~\ref{4.1} and let $R(x)$ be
the right-hand side of \eqref{2.15}. Multiplying \eqref{6.15} and \eqref{6.16}
and summing over $k$, we get
\begin{align}
R(x)&=\sum_{k=1}^\infty(-1)^{k-1}\frac{(2k)!^2}{4^{2k-1}k!^2}
\cdot\frac{1}{(2k+1)(2k+1)!}\cdot S_k(x)\label{6.17}\\
&= \sum_{k=1}^\infty\frac{(-1)^{k-1}}{16^k}\binom{2k}{k}\frac{4}{(2k+1)^2}\cdot S_k(x)\nonumber,
\end{align}
where
\begin{align}
S_k(x)&=\frac{10k^3+9k^2-k+1}{2k-1}\sum_{\nu=0}^{k-1}(-4)^\nu\widetilde{H}_{k-1}(\nu)x^\nu\label{6.18}\\
&\qquad+4k(k-1)\sum_{\mu=1}^\infty\bigg(\sum_{j=1}^{\mu}\left(\frac{2}{2k+1}\right)^{2j}(-4)^{\mu-j}\widetilde{H}_{k-1}(\mu-j)\bigg)x^\mu.\nonumber
\end{align}
Equating coefficients of $x^\mu$ in \eqref{6.17}, \eqref{6.18} with the 
left-hand side of \eqref{2.15}, we get
\begin{align}
\zeta_{\bf z}(\mu+1)
&=\sum_{k=1}^\infty\frac{(-1)^{k-1}\cdot 4}{16^k(2k+1)^2}\binom{2k}{k}
\bigg(\frac{10k^3+9k^2-k+1}{2k-1}(-4)^\mu\widetilde{H}_{k-1}(\mu)\label{6.19}\\
&\qquad+4^{\mu+1}k(k+1)\sum_{j=1}^{\mu}\frac{(-1)^{\mu-j}}{(2k+1)^{2j}}\widetilde{H}_{k-1}(\mu-j)\bigg).\nonumber
\end{align}
Finally, by the definition \eqref{2.4} we have
\begin{align*}
\zeta_{\bf z}(\mu+1)
&=\sum_{n=1}^\infty\frac{1}{(n+\tfrac{1}{2})^{2\mu+2}}
=4^{\mu+1}\sum_{n=1}^\infty\frac{1}{(2n+1)^{2\mu+2}}\\
&=4^{\mu+1}\bigg(\sum_{n=1}^\infty\frac{1}{n^{2\mu+2}}
-\sum_{n=1}^\infty\frac{1}{(2n)^{2\mu+2}} - 1\bigg) \\
&= \big(4^{\mu+1}-1)\zeta(2\mu+2) - 4^{\mu+1}.
\end{align*}
Combining this with \eqref{6.19} and dividing both sides by $4^{\mu+1}$, we
finally get \eqref{6.2}, as desired.
\end{proof}

In closing, we note that an identity very similar to \eqref{6.2} was earlier
obtained
by Leshchiner \cite[Eq.~(4b)]{Le}. One basic difference lies in the fact that
the analogue of the multiple generalized harmonic sum \eqref{6.1} used by
Leshchiner has $k_\nu\geq 0$ in the summation (using our notation, which differs
from Leshchiner's). In analogy to the identities \eqref{6.3} and \eqref{6.4}
above, the two smallest cases of Eq.~(4b) in \cite{Le} are
\begin{align*}
\frac{\pi^2}{10} 
&= 1+\sum_{k=1}^\infty\frac{(-1)^k}{16^k(2k+1)^2}\binom{2k}{k},\\
\frac{\pi^4}{96} 
&= 1+\sum_{k=1}^\infty\frac{(-1)^k}{16^k(2k+1)^2}\binom{2k}{k}
\bigg(\frac{1}{(2k+1)^2}-\frac{5}{4}\sum_{j=0}^{k-1}\frac{1}{(2j+1)^2}\bigg).
\end{align*}
In spite of the similarities, Theorem~\ref{thm:6.1} above appears to be new.

\end{document}